\def\N{\mathbb N}
\def\A{\mathcal A}
\def\P{\mathcal P}
\def\S{\mathcal S}
\def\M{\mathcal M}
\def\u{\mathbf u}
\def\t{\mathbf t}
\def\Prel{\mathcal{P}^{\mathrm{rel}}}
\def\Psirel{{\Psi}^\mathrm{rel}}
\def\AC{\rho^{\mathrm{ab}}}
\DeclareMathOperator{\sgn}{sgn}
\newtheorem{lemma}{Lemma}[section]
\newtheorem{proposition}[lemma]{Proposition}
\newtheorem{theorem}[lemma]{Theorem}
\newtheorem{corollary}[lemma]{Corollary}
\newtheorem{observation}[lemma]{Observation}
\theoremstyle{definition}
\newtheorem{definition}[lemma]{Definition}
\theoremstyle{remark}
\newtheorem{remark}[lemma]{Remark}
\begin{document}

\title{Abelian properties of Parry words}

\author{Ond\v{r}ej Turek
\bigskip \\
\normalsize{Nuclear Physics Institute} \\
\normalsize{Academy of Sciences of the Czech Republic} \\
\normalsize{250 68 \v Re\v z, Czech Republic} \\[3pt]
\small{and} \\[3pt]
\normalsize{Bogolyubov Laboratory of Theoretical Physics} \\
\normalsize{Joint Institute for Nuclear Research} \\
\normalsize{141980 Dubna, Russia} \\[4pt]
\normalsize{email: \texttt{o.turek@ujf.cas.cz}}
}

\maketitle
\begin{abstract}
Abelian complexity of a word $\mathbf{u}$ is a function that counts the number of pairwise non-abelian-equivalent factors of $\mathbf{u}$ of length $n$. We prove that for any $c$-balanced Parry word $\mathbf{u}$, the values of the abelian complexity function can be computed by a finite-state automaton. The proof is based on the notion of relative Parikh vectors. The approach works for any function $F(n)$ that can be expressed in terms of the set of relative Parikh vectors corresponding to the length $n$. For example, we show that the balance function of a $c$-balanced Parry word is computable by a finite-state automaton as well.
\end{abstract}

\section{Introduction}

Abelian complexity of a word $\u$ is a function $\AC_\u:\N\to\N$ that counts the number of pairwise non-abelian-equivalent factors of $\u$ of length $n$ \cite{RSZ}.
Although the notion is simple, explicit evaluation of $\AC_\u(n)$ for a given infinite word $\u$ is a complicated task. Let us recall two main approaches to the problem.

The first approach consists in deriving an explicit formula for the abelian complexity function. This is usually extremely difficult, therefore, nontrivial infinite words with a known expression for $\AC_\u(n)$ are very rare. There exist only a few such examples to date:
\begin{itemize}
\item Sturmian words: $\AC_\u(n)=2$ for all $n\in\N$, cf.~\cite{CoHe};
\item Thue--Morse word~\cite{RSZ};
\item a special ternary word constructed so that its abelian complexity satisfies $\AC_\u(n)=3$ for all $n\in\N$, cf.~\cite{RSZ};
\item quadratic Parry words~\cite{BBT};
\item the Tribonacci word $\t$ (the fixed point of $0\mapsto01$, $1\mapsto02$, $2\mapsto0$): a simple criterion to decide whether $\AC_\t(n)=3$ is known~\cite{RSZ2}, and recently also a formula allowing to evaluate $\AC_\t(n)$ in $\mathcal{O}(\log n)$ steps has been obtained~\cite{TuX};
\item let us mention also the paperfolding word $\mathbf{f}$, for which a finite set of recurrent relations that determine the function $\AC_\mathbf{f}(n)$ has been found~\cite{MR}.
\end{itemize}
Note that these examples are related to words over binary and ternary alphabets. To the best of our knowledge, no results have been achieved for infinite words over alphabets consisting of more than three letters.

Another approach, the most natural one, consists in calculating values $\AC_\u(n)$ from the definition. 
That is, one slides a window of size $n$ on a sufficiently long prefix of $\u$ and counts the classes of abelian-equivalent factors. Nevertheless, this is a brute-force method that can be used in practice only for small values of $n$. The length of the prefix that must be sought through is typically much greater than $n$, thus the calculation for large $n$ becomes extremely slow, and even when a powerful computer is used, it sooner or later fails for memory reasons.

In this paper we deal with an approach that is, in a way, a combination of the previous two ones. We show that for any $c$-balanced Parry word $\u$, values $\AC_\u(n)$ can be calculated by a finite-state automaton with a normal $U$-representation of $n$ as its input. In other words, instead of sliding a window of size $n$ on a certain prefix of $\u$, which is inconvenient because the required prefix length grows to infinity as $n\to\infty$, one performs a walk on a transition diagram of a discrete finite-state automaton, which is a \emph{finite} graph, independent of $n$. The result can be iterpreted also in the way that there exist functions $\delta$ and $\tau$ allowing to evaluate $\AC_\u(n)$ in $\mathcal{O}(\log n)$ steps. Our proof is constructive; we explain how to derive the finite-state automaton in question for a given word $\u$, i.e., we will explain how to find the functions $\delta$ and $\tau$.

\section{Preliminaries}\label{Preliminaries}

Let us consider a set $\A=\{0,1,2,\ldots,m-1\}$ (\emph{alphabet}) consisting of $m$ symbols (\emph{letters}) $0,1,\ldots,m-1$.
Concatenations of letters from $\A$ are called \emph{words}. Let $\A^*$ denote the free monoid of all finite words over $\A$ including the empty word $\epsilon$. The \emph{length} of a $w=w_0w_1w_2\cdots w_{n-1}\in\A^*$ is the number of its letters, $|w|=n$; the length of the empty word is defined to be $0$. The symbol $|w|_\ell$ for $\ell\in\A$ and $w\in\A^*$ denotes the number of occurences of the letter $\ell$ in the word $w$.

Infinite sequences of letters are called \emph{infinite words}. 
A finite word $w$ is a \emph{factor} of a (finite or infinite) word $\u$ if there exists a finite word $x$ and a (finite or infinite, respectively) word $y$ such that $\u=xwy$. The word $w$ is called a \emph{prefix} of $\u$ if $x=\epsilon$, and a \emph{suffix} of $\u$ if $y=\epsilon$.

For every $w\in\A^*$ and $k\in\N$, the concatenation of $k$ words $w$ is denoted by $w^k$. We set $w^0=\epsilon$. If a word $v$ has a prefix $w^k$, we use the symbol $w^{-k}v$ to denote the word satisfying $w^kw^{-k}v=v$; the symbol $vw^{-k}$ is defined analogously.

An infinite word $\u$ is called \emph{recurrent} if every factor of $\u$ occurs infinitely many times in $\u$.

An infinite word $\u$ is said to be \emph{$c$-balanced} if for every $\ell\in\A$ and for every pair of factors $v$, $w$ of $\u$ such that $|v|=|w|$, it holds
$\left||v|_\ell-|w|_\ell\right|\leq c$.

The \emph{Parikh vector} of a factor $w$ is the $m$-tuple $\Psi(w)=(|w|_0,|w|_1,\ldots,|w|_{m-1})$; note that $|w|_0+|w|_1+\cdots+|w|_{m-1}=|w|$.
For any given infinite word $\u$, let $\mathcal{P}_\u(n)$ denote the set of all Parikh vectors corresponding to factors of $\u$ having the length $n$, i.e.,
$$
\mathcal{P}_\u(n)=\left\{\Psi(w)\,;\,\text{$w$ is a factor of $\u$}, |w|=n\right\}.
$$
The \emph{abelian complexity} of a word $\u$ is the function $\AC_\u:\N\to\N$ counting the number of elements of sets $\mathcal{P}_\u(n)$, i.e.,
\begin{equation}\label{AC}
\AC_\u(n)=\#\P_\u(n)\,,
\end{equation}
where $\#$ denotes the cardinality.

The \emph{relative Parikh vector}~\cite{Tu13} is defined for any factor $w$ of $\u$ of length $n$ as
\begin{equation}\label{Psi rel}
{\Psi}_\u^\mathrm{rel}(w)=\Psi(w)-\Psi(u_0u_1\cdots u_{n-1})\,.
\end{equation}
The sum of components of ${\Psi}_\u^\mathrm{rel}(w)$ is always equal to $0$. If moreover $\u$ is a $c$-balanced word, then the components of ${\Psi}_\u^\mathrm{rel}(w)$ are bounded by $c$ for any factor $w$ of $\u$, cf.~\cite{Tu13}. Therefore, the set of all relative Parikh vectors $\left\{{\Psi}_\u^\mathrm{rel}(w)\,;\,\text{$w$ is a factor of $\u$}\right\}$ is finite for any $c$-balanced word $\u$, which is a particularly important fact.

Since the subtrahend $\Psi(u_0u_1\cdots u_{n-1})$ on the right-hand side of~\eqref{Psi rel} depends only on $n$ (and does not depend on $w$), the set of relative Parikh vectors corresponding to the length $n$,
$$
\Prel_\u(n):=\left\{\left.{\Psi}_\u^\mathrm{rel}(w)\;\right|\;\text{$w$ is a factor of $\u$}, |w|=n\right\},
$$
has the same cardinality as the set of Parikh vectors, $\P_\u(n)$. Hence we obtain, with regard to~\eqref{AC}, the formula
\begin{equation}\label{ACrel}
\AC_\u(n)=\#\Prel_\u(n)\,.
\end{equation}

\emph{Parry words} are infinite words associated with the set of $\beta$-integers for $\beta$ being a Parry number.
The famous Fibonacci word and the Tribonacci word are examples of Parry words.
Parry words are divided into two classes:
\begin{itemize}
\item A \emph{simple Parry word} over $\A=\{0,1,\ldots,m-1\}$ is a fixed point of a substitution
\begin{equation}\label{simpleParry}
\begin{array}{rccl}
\varphi:\qquad & 0 & \mapsto & 0^{\alpha_0}1 \\
& 1 & \mapsto & 0^{\alpha_1}2 \\
&   & \vdots & \\
& m-2 & \mapsto & 0^{\alpha_{m-2}}(m-1) \\
& m-1 & \mapsto & 0^{\alpha_{m-1}}
\end{array}
\end{equation}
\item A \emph{non-simple Parry word} over $\A=\{0,1,\ldots,m+p-1\}$ is a fixed point of
\begin{equation}\label{nonsimpleParry}
\begin{array}{rccl}
\varphi:\qquad & 0 & \mapsto & 0^{\alpha_0}1 \\
& 1 & \mapsto & 0^{\alpha_1}2 \\
&   & \vdots & \\
& m & \mapsto & 0^{\alpha_{m}}(m+1) \\
&   & \vdots & \\
& m+p-2 & \mapsto & 0^{\alpha_{m+p-2}}(m+p-1) \\
& m+p-1 & \mapsto & 0^{\alpha_{m+p-1}}m
\end{array}
\end{equation}
\end{itemize}
The exponents $\alpha_j$ occurring in \eqref{simpleParry} and \eqref{nonsimpleParry} are non-negative integers obeying certain restrictions~\cite{Pa,Fa}. Both substitutions must satisfy $\alpha_0\geq1$ and $\alpha_\ell\leq \alpha_0$ for all $\ell\in\A$. In addition, substitution~\eqref{simpleParry} requires $\alpha_{m-1}\geq1$, whereas substitution \eqref{nonsimpleParry} requires $\alpha_\ell\geq1$ for a certain $\ell\in\{m,m+1,\ldots,m+p-1\}$.

For a given substitution~\eqref{simpleParry} or~\eqref{nonsimpleParry}, let us set $U_j=|\varphi^j(0)|$ for every $j\in\N_0$. 
Any $n\in\N$ can be represented as a sum
\begin{equation}\label{F-rep sum}
n=\sum_{j=0}^{k} d_j U_j
\end{equation}
with integer coefficients $d_j$.
If coefficients $d_j$ are obtained by the greedy algorithm, the sequence $d_kd_{k-1}\cdots d_1d_0$ is called \emph{normal $U$-representation} of $n$ \cite{Lo} and denoted
\begin{equation}\label{F-rep}
\langle n \rangle_U=d_kd_{k-1}\cdots d_1d_0\,.
\end{equation}
The greedy algorithm implies that the coefficients in~\eqref{F-rep} satisfy $d_j\in\{0,1,\ldots,\alpha_0\}$ for all $j=0,1,\ldots,k$.
If $\u$ is the fixed point of $\varphi$, the normal $U$-representation allows to express a prefix of $\u$ of given length $n$ \cite{Dum,Fa}. Namely, the prefix of $\u$ of length $n$ represented by $\langle n \rangle_U=d_kd_{k-1}\cdots d_1d_0$ takes the form
\begin{equation}\label{prefix u}
u_0u_1\cdots u_{n-1}=\left(\varphi^{k}(0)\right)^{d_{k}}\left(\varphi^{k-1}(0)\right)^{d_{k-1}}\cdots\left(\varphi(0)\right)^{d_1}0^{d_0}\,.
\end{equation}

The \emph{incidence matrix} $\M_\varphi$ of a substitution $\varphi$ on $\A=\{0,1,\ldots,m-1\}$ is defined by
$$
\M_\varphi=\begin{pmatrix}
|\varphi(0)|_0 & |\varphi(0)|_1 & \cdots & |\varphi(0)|_{m-1} \\
|\varphi(1)|_0 & |\varphi(1)|_1 & \cdots & |\varphi(1)|_{m-1} \\
\vdots & \vdots & & \vdots \\
|\varphi(m-1)|_0 & |\varphi(m-1)|_1 & \cdots & |\varphi(m-1)|_{m-1}
\end{pmatrix}\,.
$$
The notion of incidence matrix has several useful applications. It follows immediately from the definition of $\M_\varphi$ that for any $w\in\A^*$,
\begin{equation}\label{matice}
\Psi(\varphi(w))=\Psi(w)\M_\varphi\,.
\end{equation}
Furthermore, due to~\cite{adamczewski}, if all the eigenvalues of $\M_\varphi$ except the dominant one are of modulus less than one, then the fixed point of $\varphi$ is $c$-balanced for a certain $c$.

\medskip

A \emph{deterministic finite automaton with output} (DFAO) (cf.~\cite{AS}) is a $6$-tuple $(Q,\Sigma,\delta,q_0,\Delta,\tau)$, where $Q$ is a finite set of states, $\Sigma$ is the finite input alphabet, $\delta:Q\times\Sigma\to Q$ is the transition function, $q_0$ is the initial state, $\Delta$ is the output alphabet, and $\tau:Q\to\Delta$ is the output function. If we extend the domain of $\delta$ to $Q\times\Sigma^*$ by defining $\delta(q,\epsilon)=q$ for all $q\in Q$, and $\delta(q,xa)=\delta(\delta(q,x),a)$ for all $q\in Q$, $x\in\Sigma^*$ and $a\in\Sigma$, a DFAO defines a function $f:\Sigma^*\to\Delta$ given as
$$
f(w)=\tau(\delta(q_0,w)) \qquad \text{for $w\in\Sigma^*$}.
$$
A sequence $(a_n)_{n\in\N}$ with values in a finite alphabet $\Delta$ is called \emph{$U$-automatic} (cf.~\cite{Sh88}) if there exists a DFAO $(Q,\Sigma,\delta,q_0,\Delta,\tau)$ with $\Sigma=\{0,1,\ldots,\alpha_0\}$ such that
$$
a_n=\tau(\delta(q_0,\langle n\rangle_U)) \qquad \text{for all $n\in\N$}.
$$

\section{Abelian complexity of $c$-balanced Parry words}\label{Main}

Let $\u$ be a Parry word, i.e., the fixed point of a substitution \eqref{simpleParry} or \eqref{nonsimpleParry}.
\begin{observation}\label{unbalanced}
If $\u$ is not $c$-balanced for any $c>0$, then $\left(\AC_\u(n)\right)_{n=1}^\infty$ is not an automatic sequence.
\end{observation}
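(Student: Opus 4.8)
The plan is to deduce that $\AC_\u$ is unbounded from the failure of $c$-balancedness, and then to invoke the elementary fact that an automatic sequence has finite range. Indeed, if $(\AC_\u(n))_{n=1}^\infty$ were $U$-automatic it would be generated by some DFAO $(Q,\Sigma,\delta,q_0,\Delta,\tau)$, so each value $\AC_\u(n)=\tau(\delta(q_0,\langle n\rangle_U))$ would lie in the \emph{finite} output alphabet $\Delta$; in particular $(\AC_\u(n))_{n=1}^\infty$ would be bounded. Hence it suffices to prove that $\AC_\u$ is unbounded whenever $\u$ is not $c$-balanced for any $c>0$.

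The heart of the argument is a discrete intermediate value observation. Fix $n\in\N$ and a letter $\ell\in\A$, and let $V_{n,\ell}=\{|w|_\ell : |w|=n,\ w\text{ a factor of }\u\}$ be the image of $\P_\u(n)$ under projection onto the $\ell$-th coordinate. I claim $V_{n,\ell}$ is a set of consecutive integers. Since $\u$ has only finitely many factors of length $n$, there is an $N$ such that all of them already occur in the prefix $u_0u_1\cdots u_{N-1}$, whence $V_{n,\ell}=\{|u_iu_{i+1}\cdots u_{i+n-1}|_\ell : 0\le i\le N-n\}$. Increasing $i$ by one removes $u_i$ from the window and appends $u_{i+n}$, so $|u_i\cdots u_{i+n-1}|_\ell$ changes by at most $1$; therefore the set of values it attains has no gaps. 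Since the projection $\P_\u(n)\to V_{n,\ell}$ is onto, this yields
$$\AC_\u(n)=\#\P_\u(n)\ \ge\ \#V_{n,\ell}\ =\ \max V_{n,\ell}-\min V_{n,\ell}+1.$$

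It then remains to unwind the hypothesis: saying that $\u$ is not $c$-balanced for any $c>0$ means precisely that for every $c>0$ there are a letter $\ell$, an integer $n$, and factors $v,w$ with $|v|=|w|=n$ and $\bigl||v|_\ell-|w|_\ell\bigr|>c$, i.e.\ $\max V_{n,\ell}-\min V_{n,\ell}>c$. Combined with the displayed inequality this gives $\AC_\u(n)>c+1$; as $c$ was arbitrary, $\AC_\u$ is unbounded, and hence not automatic.

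I do not anticipate a genuine obstacle here: the proof is short and self-contained. The only point requiring a little care is the absence of gaps in $V_{n,\ell}$, which rests solely on the window sliding one symbol at a time together with the trivial fact that all factors of a fixed length of $\u$ appear in a common finite prefix; neither recurrence of $\u$ nor any specific structure of Parry words is needed for this observation.
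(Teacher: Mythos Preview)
Your proof is correct and follows essentially the same route as the paper: both argue that failure of $c$-balancedness forces $\AC_\u$ to be unbounded, and an automatic sequence necessarily takes values in a finite output alphabet. The paper dismisses the first implication as ``obvious''; your sliding-window intermediate value argument supplies the detail the paper omits, but the underlying strategy is identical.
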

Observation~\ref{unbalanced} holds trivially, because any word that is not $c$-balanced has obviously unbounded abelian complexity function, which, consequently, cannot be evaluated by an automaton with a finite output alphabet.

In this section we prove the reverse implication, that is, if $\u$ is $c$-balanced for a certain $c>0$, then $\left(\AC_\u(n)\right)_{n=1}^\infty$ is a $U$-automatic sequence. We will present a constructive proof, in which we explicitly derive the finite automaton in question.

From now on until the end of the paper we assume that $\u$ is a fixed point of a substitution $\varphi$ of type~\eqref{simpleParry} or \eqref{nonsimpleParry}, which is moreover $c$-balanced for a certain $c>0$. For the sake of simplicity, we will drop the subscript $\varphi$ in the symbol $\M_\varphi$, as well as the subscript $\u$ in the symbols $\P_\u(n),\Prel_\u(n)$, $\Psi_\u^\mathrm{rel}(w)$ and $\AC_\u(n)$.

\subsection{The main idea}

We begin the exposition by sketching the key idea of our approach. Our strategy consists in the use of the assumptions for introducing certain finite sets $\S(n)$ for $n\in\N$ (their structure will be described below) with the following properties.
\begin{itemize}
\item[(P1)] For any $n\in\N$, the set of relative Parikh vectors $\Prel(n)$ can be constructed using the set $\S(n)$.
\item[(P2)] There exists a finite number of sets $\S_1,\S_2,\ldots,\S_M$ such that for any $n\in\N$, $\S(n)=\S_j$ for a certain $j\in\{1,2,\ldots,M\}$.
\item[(P3)] If the normal $U$-representation of a number $N\in\N$ satisfies $\langle N\rangle_U=\langle n\rangle_U d$ for certain $n\in\N$ and $d\in\{0,1,\ldots,\alpha_0\}$, then the set $\S(N)$ can be constructed from $\S(n)$.
\end{itemize}
Property (P2) combined with property (P1) guarantees the existence of finitely many sets of relative Parikh vectors, $\Prel_1,\ldots,\Prel_M$, such that $\S(n)=\S_j\Rightarrow\Prel(n)=\Prel_j$. At the same time, combining property (P2) with property (P3) allows us to define a function $\delta(j,d)$ such that $\bigl(\;\S(n)=\S_j\wedge\langle N\rangle_U=\langle n\rangle_U d\;\bigr)\Rightarrow\S(N)=\S_{\delta(j,d)}$.

Once the sets $\Prel_1,\ldots,\Prel_M$ are established, one can introduce a function $\tau:\{1,2,\ldots,M\}\to\N$ defined as $\tau(j)=\#\Prel_j$. Then the calculation of $\AC(n)$ for a given $n\in\N$ is carried out as follows. In the first step, the function $\delta$ is used to tranform $\langle n\rangle_U$ into the value $j$ such that $\S(n)=\S_j$. Note that $j$ can attain only values $1,\ldots,M$, thus a machine with \emph{finitely many states} is sufficient to perform the procedure. In the second step, the function $\tau$ is used to transform the value $j$ into the value $\AC(n)$. Note that it holds $\S(n)=\S_j \;\Rightarrow\; \Prel(n)=\P_j \;\Rightarrow\; \AC(n)=\tau(j)$, cf. equation~\eqref{ACrel},

For the sake of clarity, the section is divided into subsections according to the following outline. At first we define the sets $\S(n)$. Then we prove, step by step, that the sets $\S(n)$ have properties (P1), (P2), (P3). Finally, we summarize the facts and formulate the main result, i.e., we express $\AC(n)$ in terms of $\langle n\rangle_U$, and we state that the sequence $\left(\AC(n)\right)_{n=1}^{\infty}$ is $U$-automatic.

\subsection{Definition of $\S(n)$}

Establishing the sets $\S(n)$ for $n\in\N$ is the initial step. However,
before we proceed to the definition of $\S(n)$, we need to introduce two auxiliary constants, which will be denoted by $H$ and $L$.
For any finite factor $w$ of $\u$, let $h_w$ be the sum of components of the vector $\Psirel(w)\M$. Since $\u$ is $c$-balanced by assumption, the set $\{\Psirel(w)\,;\,\text{$w$ is a factor of $\u$}\}$ is finite (see Sect.~\ref{Preliminaries}), hence the set $\{|h_w|\,;\,\text{$w$ is a factor of $\u$}\}$ is finite as well. Therefore, it has a maximum. We put $H$ to be any (fixed) number satisfying
\begin{equation}\label{H}
H\geq\max\{|h_w|\,;\,\text{$w$ is a factor of $\u$}\}\,.
\end{equation}
If $\varphi$ is a substitution \eqref{simpleParry} or \eqref{nonsimpleParry}, it holds $|\varphi(w)|=(\alpha_0+1)|w|_0+\sum_{\ell=1}^{\#\A-1}|\varphi(\ell)|\cdot|w|_\ell\geq\alpha_0|w|_0+|w|$ for any $w\in\A^*$. Since Parry words are recurrent, we have moreover $|w|\to\infty\Rightarrow|w|_0\to\infty$. To sum up, $|\varphi(w)|-|w|\geq\alpha_0|w|_0\to\infty$ as $|w|\to\infty$. Consequently, for any constant $C$ there exists a length $L$ such that the inequality $|\varphi(w)|-|w|\geq C$ holds true for all factors $w$ of $\u$ satisfying $|w|\geq L$. For technical reasons we consider the value $C=2\alpha_0+H$ and fix $L$ to be any number such that the implication
\begin{equation}\label{L}
|w|\geq L\quad\Rightarrow\quad|\varphi(w)|-|w|\geq2\alpha_0+H
\end{equation}
holds true for all factors $w$ of $\u$.
\begin{remark}\label{HL}
Fixing $H$ and $L$ does not require any detailed information on the structure of factors of $\u$. The knowledge of a balance bound $c$ and a certain prefix of $\u$ are sufficient. Indeed, one can set for example $H$ to be the maximal sum of components of vectors $(c_0,c_1,\ldots,c_{\#\A-1})\M$, where $c_j\in\{-c,-c+1,\ldots,c-1,c\}$ for all $j\in\{0,1,\ldots,\#\A-1\}$, and
$$
L=\min\left\{n\;;\;|u_0u_1\cdots u_{n-1}|_0\geq2+\frac{H}{\alpha_0}+c\right\}\,.
$$
(Since these formulas are rather illustrative and not essential for our further considerations, we omit the proof.)
\end{remark}

With the constant $L$ in hand, we can introduce sets $\S(n)$ for $n\in\N$.

\begin{definition}
Let $L$ be the number introduced by equation~\eqref{L}. For all $n\in\N$, we define the set
\begin{equation}\label{S(n)}
\S(n)=
\left\{
\left(
\Psirel(u_j u_{j+1}\cdots u_{j+n-1}),
u_j,
u_{j+n-L}\cdots u_{j+n+L}
\right)
\;;\; j\geq L
\right\}\,.
\end{equation}
\end{definition}

\begin{remark}\label{Rem. S(n)}
The set $\S(n)$ consists of triples $(\psi,a,b_{-L}\cdots b_0\cdots b_L)$, where
\begin{itemize}
\item $\psi=\Psi(w)$ is the Parikh vector of a certain factor $w$ of $\u$ of length $n$;
\item $a\in\A$ is the first letter of $w$;
\item $b_{-L}\cdots b_{L}$ is a factor of $\u$ of length $2L+1$; its middle letter $b_0$ coincides with the successor of the last letter of $w$ in $\u$.
\end{itemize}
\end{remark}
The relative positions of $w$, $a$ and $b_{-L}\cdots b_0\cdots b_L$ in $\u$ can be illustrated in the following way:
$$
\u=u_0\cdots u_{j-1}\underbrace{\overbrace{u_j}^{a} u_{j+1}\cdots \overbrace{u_{j+n-L}\cdots u_{j+n-1}}^{b_{-L}\cdots b_{-1}}}_{w}\overbrace{u_{j+n}}^{b_0}\overbrace{u_{j+n+1}\cdots u_{j+n+L}}^{b_1\cdots b_L}u_{j+n+L+1}\cdots\,.
$$
Since $j$ takes all values starting with $L$, the factor $u_j u_{j+1}\cdots u_{j+n-1}$ in equation~\eqref{S(n)} scans all factors of length $n$ occurring in $\u$ except for the prefix of $\u$ of length $L-1$.

\begin{remark}
It is possible to formulate the definition~\eqref{S(n)} using factors of the type $u_{j+n-L_1}\cdots u_{j+n+L_2}$ for two independent constants $L_1$ and $L_2$ instead of $u_{j+n-L}\cdots u_{j+n+L}$. Involving two constants allows to choose them such that $L_1+L_2<2L$, i.e., the factor $u_{j+n-L_1}\cdots u_{j+n+L_2}$ can be taken shorter than $u_{j+n-L}\cdots u_{j+n+L}$. This improvement leads to a more efficient calculation. However, we stick to using a single constant $L$ in order to simplify the exposition.
\end{remark}

\subsection{Property (P1)}

Let us show that the set or relative Parikh vectors $\Prel(n)$ can be trivially obtained from $\S(n)$.
\begin{observation}
For all $n\in\N$, it holds
\begin{equation}\label{Prel S}
\Prel(n)=\left\{\psi\;;\;(\psi,a,b_{-L}\cdots b_0\cdots b_L)\in\S(n)\right\}.
\end{equation}
\end{observation}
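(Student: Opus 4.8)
The plan is to show that the right-hand side of~\eqref{Prel S} is exactly $\Prel(n)$ by a double inclusion, both directions being essentially immediate from the definitions.

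First I would prove the inclusion ``$\supseteq$''. Take any $\psi$ appearing as the first component of some triple $(\psi,a,b_{-L}\cdots b_0\cdots b_L)\in\S(n)$. By the definition~\eqref{S(n)} of $\S(n)$, there is an index $j\geq L$ with $\psi=\Psirel(u_j u_{j+1}\cdots u_{j+n-1})$. Since $u_j u_{j+1}\cdots u_{j+n-1}$ is a factor of $\u$ of length $n$, the vector $\psi$ is by definition the relative Parikh vector of a length-$n$ factor of $\u$, hence $\psi\in\Prel(n)$.

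Conversely, for ``$\subseteq$'', take any $\psi\in\Prel(n)$. By definition of $\Prel(n)$ there is a factor $w$ of $\u$ with $|w|=n$ and $\psi=\Psirel(w)$. The key point, already observed in the text after equation~\eqref{S(n)}, is that as $j$ ranges over all integers $\geq L$, the factors $u_j u_{j+1}\cdots u_{j+n-1}$ exhaust all factors of $\u$ of length $n$ with the sole possible exception of the prefix of length $L-1$ of $\u$ (which only matters when $n\le L-1$, and even then only for that one prefix). Here I would use recurrence of Parry words: every factor of $\u$ occurs infinitely often, hence in particular it occurs starting at some position $j\geq L$, so even the prefix of length $n$ is realized as $u_j\cdots u_{j+n-1}$ for suitable $j\geq L$. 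Thus there exists $j\geq L$ with $u_j u_{j+1}\cdots u_{j+n-1}$ abelian-equivalent to $w$ --- indeed with the same Parikh vector --- so $\Psirel(u_j\cdots u_{j+n-1})=\psi$. Setting $a=u_j$ and $b_{-L}\cdots b_0\cdots b_L=u_{j+n-L}\cdots u_{j+n+L}$ gives a triple in $\S(n)$ whose first component is $\psi$, which completes the inclusion.

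There is no real obstacle here; the only point requiring a moment's care is making sure that restricting to starting positions $j\geq L$ does not lose any Parikh vector, and this is exactly where recurrence of $\u$ is invoked. Combining the two inclusions yields~\eqref{Prel S}, which is property (P1).
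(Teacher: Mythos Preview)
Your argument is correct and follows essentially the same route as the paper: both reduce the claim to the observation that restricting to starting positions $j\geq L$ does not lose any relative Parikh vector, and both justify this using the recurrence of Parry words. The paper packages this as a single chain of set equalities (writing $\Prel(n)=\{\Psirel(u_j\cdots u_{j+n-1})\,;\,j\geq C'\}$ for any $C'$, then taking $C'=L$), while you spell it out as a double inclusion, but the content is identical. One small remark: recurrence actually gives you an exact occurrence $u_j\cdots u_{j+n-1}=w$ at some $j\geq L$, so your ``abelian-equivalent'' phrasing, while sufficient, is weaker than what you have.
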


\begin{proof}
Due to the definition of $\S(n)$, the right-hand side of~\eqref{Prel S} satisfies
\begin{equation}\label{psi S}
\left\{\psi\;;\;(\psi,a,b_{-L}\cdots b_0\cdots b_L)\in\S(n)\right\}=\left\{\Psirel(u_j u_{j+1}\cdots u_{j+n-1}) \; ; \; j\geq L\right\}.
\end{equation}
By definition of $\Prel(n)$, we have
$$
\Prel(n)=\left\{\Psirel(u_j u_{j+1}\cdots u_{j+n-1}) \; ; \; j\in\N_0\right\}.
$$
However, since $\u$ is a Parry word and, therefore, a recurrent word, it also holds
$$
\Prel(n)=\left\{\Psirel(u_j u_{j+1}\cdots u_{j+n-1}) \; ; \; j\geq C'\right\}
$$
for any $C'\in\N_0$, in particular for $C'=L$. A comparison of this equation for $C'=L$ with equation~\eqref{psi S} gives equation~\eqref{Prel S}.
\end{proof}

\subsection{Property (P2)}\label{Sect. (P2)}

The proof of property (P2) is done in two easy steps. At first we take advantage of the $c$-balancedness of $\u$ in Proposition~\ref{U S(n) is finite} below.
\begin{proposition}\label{U S(n) is finite}
The union $\bigcup_{n=1}^\infty\S(n)$ is a finite set.
\end{proposition}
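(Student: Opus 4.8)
The plan is to show that each of the three ``coordinates'' of a triple in $\S(n)$ ranges over a finite set, independently of $n$, so that $\bigcup_{n=1}^\infty\S(n)$ is contained in a fixed finite Cartesian product. Recall that a triple in $\S(n)$ has the form $\bigl(\Psirel(w),a,b_{-L}\cdots b_0\cdots b_L\bigr)$ where $w$ is a factor of $\u$ of length $n$, $a\in\A$, and $b_{-L}\cdots b_L$ is a factor of $\u$ of length $2L+1$. First I would observe that $a\in\A$ and $\A$ is finite by hypothesis, so the middle coordinate contributes only $\#\A$ possibilities. Next, the number of distinct factors of $\u$ of any fixed length $2L+1$ is finite (for instance because $\u$ has linear factor complexity, or simply because there are at most $(\#\A)^{2L+1}$ words of that length over $\A$), and $L$ is a fixed constant not depending on $n$; hence the third coordinate also ranges over a finite set.

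The key point is the first coordinate. Here I would invoke precisely the fact recalled in Section~\ref{Preliminaries}: since $\u$ is $c$-balanced, every component of $\Psirel(w)$ is bounded in absolute value by $c$, and the components sum to $0$. Therefore
$$
\Psirel(w)\in\{-c,-c+1,\ldots,c-1,c\}^{\#\A}
$$
for every factor $w$ of $\u$, and this set is finite (of cardinality at most $(2c+1)^{\#\A}$), again independently of $n$. Combining the three bounds, every triple in $\bigcup_{n=1}^\infty\S(n)$ lies in the fixed finite set $\{-c,\ldots,c\}^{\#\A}\times\A\times\{\text{factors of }\u\text{ of length }2L+1\}$, which proves the proposition.

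I do not expect any genuine obstacle here: the proposition is essentially a bookkeeping consequence of $c$-balancedness, which is exactly the hypothesis that was built into the definitions of $H$, $L$, and $\S(n)$. The only mild subtlety is to make sure that $L$ is a constant fixed once and for all (it is, by~\eqref{L}) so that ``factors of length $2L+1$'' is a length that does not grow with $n$; this is immediate from the construction. After this proposition, property (P2) will follow quickly, since $\S(n)$ is for every $n$ a subset of the single finite set $\bigcup_{n=1}^\infty\S(n)$, and a finite set has only finitely many subsets — giving the desired list $\S_1,\ldots,\S_M$.
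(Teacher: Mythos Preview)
Your argument is correct and is essentially the same as the paper's own proof: bound the relative Parikh vectors by $(2c+1)^{\#\A}$ using $c$-balancedness, and bound the remaining coordinates $(a,b_{-L}\cdots b_L)$ by $(\#\A)^{2L+2}$ since $L$ is fixed. The paper just states these two bounds and multiplies them, exactly as you do.
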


\begin{proof}
If $\u$ is defined over an alphabet $\A$, there obviously exist at most $(\#\A)^{2L+2}$ couples of the type $(u_j,u_{j+n-L}\cdots u_{j+n+L})$. Furthermore, since $\u$ is assumed to be $c$-balanced for a certain $c$, the entries of $\Psi(w)$ are bounded by $c$ for any factor $w$ of $\u$. Consequently, the number of all relative Parikh vectors of factors of $\u$ is bounded by $(2c+1)^{\#\A}$. To sum up, $\#\bigcup_{n=1}^\infty\S(n)\leq(2c+1)^{\#\A}\cdot (\#\A)^{2L+2}$.
\end{proof}

Property (P2) is a straightforward corollary of Proposition~\ref{U S(n) is finite}.
\begin{corollary}\label{Finite}
There exist sets $\S_1,\S_2,\ldots,\S_{M}$ such that
\begin{equation}\label{S(n)=S_j}
\left(\forall n\in\N\right)\,\left(\exists j\in\{1,2,\ldots,M\}\right)\,\left(\S(n)=\S_j\right).
\end{equation}
\end{corollary}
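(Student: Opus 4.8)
The plan is to obtain Corollary~\ref{Finite} immediately from Proposition~\ref{U S(n) is finite} by a subset-counting argument, so the work is essentially already done. First I would observe that, by the very definition~\eqref{S(n)}, each individual set $\S(n)$ is a subset of the union $\bigcup_{k=1}^\infty\S(k)$. Proposition~\ref{U S(n) is finite} tells us this union is a finite set; write $T$ for its cardinality. Hence every $\S(n)$ belongs to the power set of $\bigcup_{k=1}^\infty\S(k)$, a collection having exactly $2^T$ elements.

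It follows that the family $\left\{\S(n)\,;\,n\in\N\right\}$ is a subfamily of a finite family of sets, and is therefore itself finite, with at most $2^T$ pairwise distinct members. Listing these distinct members as $\S_1,\S_2,\ldots,\S_M$ (so $M\leq 2^T$), we get that for every $n\in\N$ the set $\S(n)$ coincides with $\S_j$ for some $j\in\{1,2,\ldots,M\}$, which is exactly~\eqref{S(n)=S_j}. If one wants an explicit bound, the estimate from the proof of Proposition~\ref{U S(n) is finite} yields $M\leq 2^{(2c+1)^{\#\A}\cdot(\#\A)^{2L+2}}$, though this is wildly pessimistic and is not needed in the sequel.

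I do not expect any genuine obstacle at this step: once Proposition~\ref{U S(n) is finite} is established, Corollary~\ref{Finite} is a one-line pigeonhole remark. All the substance of property (P2) sits in Proposition~\ref{U S(n) is finite}, which itself rests on two finiteness facts — the $c$-balancedness of $\u$ bounding the entries of every relative Parikh vector, and the trivial bound $(\#\A)^{2L+2}$ on the number of possible pairs $(u_j,\,u_{j+n-L}\cdots u_{j+n+L})$ over the finite alphabet $\A$.
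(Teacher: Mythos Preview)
Your proposal is correct and follows exactly the same route as the paper: each $\S(n)$ is a subset of the finite union $\bigcup_{k=1}^\infty\S(k)$ from Proposition~\ref{U S(n) is finite}, so only finitely many distinct $\S(n)$ can occur. The paper's own proof is the same one-line subset-counting remark, without the explicit power-set bound you added.
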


\begin{proof}
Each $\S(n)$ is a subset of $\bigcup_{n=1}^\infty\S(n)$. The union $\bigcup_{n=1}^\infty\S(n)$ is finite due to Proposition~\ref{U S(n) is finite}, thus there can exist only finitely many its subsets.
\end{proof}

\subsection{Property (P3)}\label{Sect. (P3)}

The proof of property (P3) begins with an auxiliary proposition, the aim of which is to show that $\S(n)$ can be found by exploring just a certain specified finite segment of $\u$.
\begin{proposition}\label{B(n)}
There exist constants $P,Q\in\N$ such that for any $n\in\N$, the set $\S(n)$ is given by
\begin{equation}\label{S(n) p q}
\S(n)=
\left\{
\left(
\Psirel(u_j u_{j+1}\cdots u_{j+n-1}),
u_j,
u_{j+n-L}\cdots u_{j+n+L}
\right)
\;;\; U_{k+P}\leq j<U_{k+Q}
\right\},
\end{equation}
where $k\in\N_0$ is chosen so that $n<U_{k+1}$.
\end{proposition}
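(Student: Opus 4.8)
The plan is to observe that every triple occurring in $\S(n)$ is already determined by a single factor of $\u$ of length $n+2L+1$, and then to combine the linear recurrence of $\u$ with the geometric growth of the lengths $U_k$ in order to locate all such factors inside one bounded window of $\u$ whose endpoints are of the form $U_{k+P}$ and $U_{k+Q}$.

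\emph{Step 1 (reduction to occurrences of long factors).} Fix $n$ and, for $j\geq L$, set $V_j:=u_{j-L}u_{j-L+1}\cdots u_{j+n+L}$, a factor of $\u$ of the constant length $N:=n+2L+1$. Inside $V_j$ the letter $u_j$ sits at offset $L$, the block $u_j\cdots u_{j+n-1}$ at offsets $L,\dots,L+n-1$, and the block $u_{j+n-L}\cdots u_{j+n+L}$ at offsets $n,\dots,n+2L$; all three ranges depend on $n$ alone. Since a Parikh vector does not change when its factor is shifted inside $\u$, and since $\Psi(u_0u_1\cdots u_{n-1})$ depends on $n$ alone, it follows that the triple
$$
\bigl(\Psirel(u_ju_{j+1}\cdots u_{j+n-1}),\,u_j,\,u_{j+n-L}\cdots u_{j+n+L}\bigr)
$$
is the value $T_n(V_j)$ of a map $T_n$ applied to the word $V_j$ alone. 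Hence \eqref{S(n)} reads $\S(n)=\{T_n(V_j)\;;\;j\geq L\}$, and, since every factor $v$ of $\u$ with $|v|=N$ occurs at some position $p\geq 0$ and therefore equals $V_{p+L}$ with $p+L\geq L$, we even get $\S(n)=\{T_n(v)\;;\;\text{$v$ a factor of $\u$, }|v|=N\}$. Thus it suffices to produce $P<Q$ in $\N$ such that $U_{k+P}\geq L$ and such that, whenever $n<U_{k+1}$, every factor of $\u$ of length $N=n+2L+1$ has an occurrence whose starting position $p$ obeys $U_{k+P}-L\leq p$ and $p+N\leq U_{k+Q}-L$. Indeed the indices $j:=p+L$ then lie in $\{U_{k+P},\dots,U_{k+Q}-1\}$ and produce, through $T_n$, exactly the right-hand side of \eqref{S(n) p q}; conversely any $j$ with $U_{k+P}\leq j<U_{k+Q}$ satisfies $j\geq L$, so by \eqref{S(n)} it contributes a triple of $\S(n)$.

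\emph{Step 2 (quantitative input and conclusion).} Here I would invoke two standard facts. First, being the fixed point of the primitive substitution $\varphi$, the word $\u$ is linearly recurrent: there is $R\geq1$ such that every factor of $\u$ of length $m$ occurs inside every factor of $\u$ of length $Rm$. Second, the lengths $U_k=|\varphi^k(0)|$ satisfy a linear recurrence read off from \eqref{simpleParry}, resp.\ \eqref{nonsimpleParry}, whose dominant root is the Parry number $\beta>1$ (equivalently, $\beta$ is the dominant eigenvalue of $\M$); since $\M$ is primitive its other eigenvalues are strictly smaller in modulus, whence $U_k=\Theta(\beta^k)$. One may therefore fix $P$ with $U_P\geq L$ (so $U_{k+P}\geq L$ for every $k\in\N_0$, the sequence $(U_k)$ being increasing) and then, using $U_k=\Theta(\beta^k)$, fix $Q>P$ large enough that
$$
U_{k+Q}-U_{k+P}\geq R\,(U_{k+1}+2L+1)\qquad\text{for all }k\in\N_0;
$$
since $\beta>1$ this boils down to making $\beta^{Q-P}$ exceed a fixed multiple of $\beta$, which holds for all $k$ at once once $Q-P$ is large. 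With $P,Q$ so chosen, take any $n$ and any $k$ with $n<U_{k+1}$; then $N:=n+2L+1<U_{k+1}+2L+1$, the factor $u_{U_{k+P}-L}\,u_{U_{k+P}-L+1}\cdots u_{U_{k+Q}-L-1}$ of $\u$ is well defined (because $U_{k+P}\geq L$) and has length $U_{k+Q}-U_{k+P}\geq RN$, so by linear recurrence it contains an occurrence of every length-$N$ factor, starting at a position $p$ with $U_{k+P}-L\leq p$ and $p+N\leq U_{k+P}-L+RN\leq U_{k+Q}-L$. By Step 1, this establishes \eqref{S(n) p q}.

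I expect the crux to be the uniform-in-$k$ choice of $P$ and $Q$ in Step 2: one must simultaneously guarantee $U_{k+P}\geq L$ and a window $\{U_{k+P},\dots,U_{k+Q}-1\}$ long enough, \emph{relative to the scale $U_{k+1}$ that controls the factor lengths in play}, for linear recurrence to force every relevant factor to occur in it. This is exactly the point where the linear recurrence of $\u$ and the geometric growth of $(U_k)$ are both indispensable, whereas the bookkeeping with the offsets $\pm L$ in Step 1 is routine and only perturbs $P$ and $Q$ by additive constants.
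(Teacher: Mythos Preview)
Your argument is correct, and Step~1 is essentially identical to the paper's opening reduction (the paper uses the factor $u_{j-L+1}\cdots u_{j+n+L}$ of length $n+2L$, you use $u_{j-L}\cdots u_{j+n+L}$ of length $n+2L+1$, which makes no difference).

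The genuine divergence is in Step~2. The paper does not invoke linear recurrence or the asymptotics $U_k=\Theta(\beta^k)$; instead it quotes a concrete recurrence bound from~\cite[Prop.~4.8]{Tu13} (all factors of length $n'<U_{k'}$ occur in $\varphi^{R+k'+1}(0)$), finds an $r$ with $n+2L<U_{k+1+r}$, and then constructs $P,Q$ explicitly: $P$ is taken with $P\geq R+r+2$ and $U_P\geq L$, and $Q$ is chosen so that $\bigl(\varphi^{P-R-r-2}(0)\bigr)^{-1}\varphi^{Q-R-r-2}(0)$ contains a letter~$0$, which after applying $\varphi^{k+R+r+2}$ forces $\bigl(\varphi^{k+P}(0)\bigr)^{-1}\varphi^{k+Q}(0)$ to contain $\varphi^{R+k+r+2}(0)$ and hence every factor of length $n+2L$. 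Your route is cleaner and more conceptual: you use that a primitive substitution fixed point is linearly recurrent and that $U_k$ grows geometrically, which immediately gives a uniform window. The paper's route, by contrast, is fully constructive in terms of the substitution data (one can read off admissible $P,Q$ directly from $\varphi$), which matters for the algorithmic flavour of the rest of the paper. One small point: your sentence ``this boils down to making $\beta^{Q-P}$ exceed a fixed multiple of $\beta$'' is a little loose, since you really need $U_{k+Q}-U_{k+P}\geq R(U_{k+1}+2L+1)$ for \emph{all} $k\geq0$; but the finitely many small $k$ are handled by enlarging $Q$, as you implicitly acknowledge.
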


\begin{proof}
First of all, let us make clear that for each $j\geq L$, the triple
$$
\left(\Psirel(u_j u_{j+1}\cdots u_{j+n-1}),u_j,u_{j+n-L}\cdots u_{j+n+L}\right)\in\S(n)
$$
can be unambiguously constructed from the factor $u_{j-L+1}\cdots u_{j+n+L}$. Indeed,
\begin{itemize}
\item $u_{j+n-L}\cdots u_{j+n+L}$ is a suffix of $u_{j-L+1}\cdots u_{j+n+L}$ of length $2L+1$,
\item $u_j$ is the $L$-th letter of $u_{j-L+1}\cdots u_{j+n+L}$,
\item $\Psirel(u_j u_{j+1}\cdots u_{j+n-1})$ is the relative Parikh vector of a factor that begins at the $L$-th letter of $u_{j-L+1}\cdots u_{j+n+L}$ and ends at the $(L+n-1)$-th letter of $u_{j-L+1}\cdots u_{j+n+L}$.
\end{itemize}
Note that the factor $u_{j-L+1}\cdots u_{j+n+L}$ has length $n+2L$. To sum up, any element of $\S(n)$ can be constructed from a certain factor of $\u$ of length $n+2L$. Let us find $P$ and $Q$ such that the set $\{u_{j-L+1}\cdots u_{j+n+L}\,;\,U_{k+P}\leq j<U_{k+Q}\}$ contains all factors of $\u$ of length $n+2L$. Obviously, once such constants are found, it follows that the set
$$
\{\left(\Psirel(u_j u_{j+1}\cdots u_{j+n-1}),u_j,u_{j+n-L}\cdots u_{j+n+L}\right)\;;\; U_{k+P}\leq j<U_{k+Q}\}
$$
is equal to the whole set $\S(n)$.

The derivation of $P$ and $Q$ will require a convenient estimate of $n+2L$. We have $n<U_{k+1}$ by assumption. Since $L$ is independent of $n$ and the values $U_i$ grow in general roughly exponentially with $i$, there exists a constant $r$ (independent of $n$ and $k$) with the property $n+2L<U_{k+1+r}$.

At the same time, according to~\cite[Prop.~4.8]{Tu13}, there exists a constant $R$ such that all factors of $\u$ of length $n'<U_{k'}$ can be found in the prefix of $\u$ of length $n'+U_{R+k'}$. Obviously, the constant $R$ can be assumed big enough so that all factors of $\u$ of length $n'$ can be found also in the prefix of $\u$ of length $U_{R+k'+1}$, i.e., in the word $\varphi^{R+k'+1}(0)$. The special choice $n'=n+2L$ and $k'=k+1+r$ then gives: All factors of $\u$ of length $n+2L$ are contained in $\varphi^{R+k+r+2}(0)$.
Now we are ready to establish $P$ and $Q$.

\begin{enumerate}
\item
Let $P$ be an integer satisfying $P\geq R+r+2$ and $U_P\geq L$. The condition $P\geq R+r+2$ is technical (needed for the corectness of the definition of $Q$ below), the condition $U_P\geq L$ ensures that the factor $u_{j+n-L}\cdots u_{j+n+L}$ is well defined for all $j\geq U_{k+P}$, because $L\leq U_P$ and $j\geq U_{k+P}$ trivially implies $j+n-L\geq0$.
\item
Having $P$ fixed, let us set $Q$ to be a number such that $\left(\varphi^{P-R-r-2}(0)\right)^{-1}\varphi^{Q-R-r-2}(0)$ contains a letter $0$.
\end{enumerate}
The choice of $Q$ ensures that the word
$$
\left(\varphi^{k+P}(0)\right)^{-1}\varphi^{k+Q}(0)=\varphi^{k+R+r+2}\left(\left(\varphi^{P-R-r-2}(0)\right)^{-1}\varphi^{Q-R-r-2}(0)\right)
$$
has the factor $\varphi^{R+k+r+2}(0)$.
Since we already know that $\varphi^{R+k+r+2}(0)$ contains all factors of $\u$ of length $n+2L$, we infer that $\left(\varphi^{k+P}(0)\right)^{-1}\varphi^{k+Q}(0)$ contains all factors of $\u$ of length $n+2L$ as well.
This fact together with the inclusion
\begin{multline*}
\{w\,;\,\text{$w$ is a factor of $\left(\varphi^{k+P}(0)\right)^{-1}\varphi^{k+Q}(0)$ of length $n+2L$}\} \\
=\{u_{j-L+1}\cdots u_{j+n+L}\,;\,U_{k+P}+L-1\leq j<U_{k+Q}-n-L\} \\
\subset\{u_{j-L+1}\cdots u_{j+n+L}\,;\,U_{k+P}\leq j<U_{k+Q}\}\,,
\end{multline*}
implies that $\{u_{j-L+1}\cdots u_{j+n+L}\,;\,U_{k+P}\leq j<U_{k+Q}\}$ indeed contains all factors of $\u$ of length $n+2L$, as we set to prove.
\end{proof}

In Proposition~\ref{inductive} below, we demonstrate that sets $\S(n)$ can be constructed in an inductive way, using the normal $U$-representation of $n$. This method is considerably more efficient for obtaining $\S(n)$ for large $n$ than using formula~\eqref{S(n) p q}. The result will be also essential for proving the $U$-automaticity of $\left(\AC(n)\right)_{n=1}^\infty$.

\begin{proposition}\label{inductive}
There exists an algorithm transforming the set $\S(n)$ into the set $\S(N)$
for any pair of integers $n,N\in\N$ such that $\langle N\rangle_U=\langle n\rangle_U d$ for a certain $d\in\{0,1,\ldots,\alpha_0\}$, i.e.,
\begin{equation}\label{Nnq}
\langle n\rangle_U=d_{k}\cdots d_0\,, \qquad
\langle N\rangle_U=d_k\cdots d_{0}d\,.
\end{equation}
\end{proposition}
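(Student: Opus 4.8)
The plan is to build the algorithm around a desubstitution of length-$N$ factors, governed by the prefix identity from~\eqref{prefix u}. I would first record two elementary facts. Since $\langle N\rangle_U=\langle n\rangle_U d$, formula~\eqref{prefix u} gives
$$
u_0u_1\cdots u_{N-1}=\varphi(u_0u_1\cdots u_{n-1})\,0^{d}\,,
$$
so $N=|\varphi(u_0\cdots u_{n-1})|+d$. Secondly, by~\eqref{matice} the sum of the components of $\Psirel(w)\M$ equals $|\varphi(w)|-|\varphi(u_0\cdots u_{|w|-1})|$; denote this integer by $h_w$. It satisfies $|h_w|\le H$ by~\eqref{H}, and, crucially, it depends only on $\Psirel(w)$, hence is recoverable from the first entry of any triple of $\S(n)$.

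The heart of the proof is the following desubstitution. Let $W=u_{j'}u_{j'+1}\cdots u_{j'+N-1}$ be a factor of $\u$; by recurrence (as in the proof of~\eqref{Prel S}) we may take $j'$ arbitrarily large. Writing $\u=\varphi(u_0)\varphi(u_1)\cdots$, fix $i$ and $q$ with $j'=|\varphi(u_0\cdots u_{i-1})|+q$ and $0\le q<|\varphi(u_i)|$; because each $\varphi(\ell)$ begins with $0^{\alpha_\ell}$, the length-$q$ prefix of $\varphi(u_i)$ is $0^{q}$. Put $\tilde v:=u_i\cdots u_{i+n-1}$, $a:=u_i$, $b_s:=u_{i+n+s}$ for $-L\le s\le L$, so that $(\Psirel(\tilde v),a,b_{-L}\cdots b_L)\in\S(n)$. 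Tracking positions one sees that, with $\Delta:=q+d-h_{\tilde v}$, the factor $W$ is obtained from $0^{-q}\varphi(\tilde v)$ by appending the length-$\Delta$ prefix of $\varphi(b_0)\varphi(b_1)\cdots$ if $\Delta\ge0$, and by removing the length-$(-\Delta)$ suffix of $\varphi(\tilde v)$ if $\Delta<0$. Now $0\le q,d\le\alpha_0$ and $|h_{\tilde v}|\le H$, so $|\Delta|\le 2\alpha_0+H$; and the choice of $L$ in~\eqref{L} forces $|\varphi(b_{-L}\cdots b_{-1})|$ and $|\varphi(b_0\cdots b_L)|$ to be at least $L+2\alpha_0+H$, whence the appended prefix is a prefix of $\varphi(b_0\cdots b_L)$ and the removed suffix is a suffix of $\varphi(b_{-L}\cdots b_{-1})$ (for the latter one uses that $\varphi(\tilde v)$ and $\varphi(b_{-L}\cdots b_{-1})$ share a common suffix longer than $-\Delta$).

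From this the triple of $W$ can be read off from $(\Psirel(\tilde v),a,b_{-L}\cdots b_L)$ and $d$ alone. For the relative Parikh vector, substitute $\Psi(\varphi(\tilde v))=\Psi(\tilde v)\M=\Psirel(\tilde v)\M+\Psi(\varphi(u_0\cdots u_{n-1}))$ and $\Psi(u_0\cdots u_{N-1})=\Psi(\varphi(u_0\cdots u_{n-1}))+d\,e_0$ (with $e_0=(1,0,\ldots,0)$) into $\Psirel(W)=\Psi(W)-\Psi(u_0\cdots u_{N-1})$; the two occurrences of $\Psi(\varphi(u_0\cdots u_{n-1}))$ cancel and
$$
\Psirel(W)=\Psirel(\tilde v)\M-(q+d)\,e_0+\varepsilon\,\Psi(X)\,,
$$
where $(\varepsilon,X)=(+1,\text{appended prefix})$ if $\Delta\ge0$ and $(-1,\text{removed suffix})$ otherwise; note $\Delta$, and hence the case and the length of $X$, are determined by $\Psirel(\tilde v)$. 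The first letter of $W$ is the first letter of the length-$(|\varphi(a)|-q)$ suffix of $\varphi(a)$, a function of $(a,q)$. Lastly, $W$ ends at the $\u$-position $\beta+\Delta$, where $\beta=|\varphi(u_0\cdots u_{i+n-1})|$ is the left endpoint of the $\varphi$-image of $b_0$; therefore the length-$(2L+1)$ window of $\u$ centred at the successor of the last letter of $W$ is the length-$(2L+1)$ factor of $\varphi(b_{-L}\cdots b_L)$ beginning at offset $|\varphi(b_{-L}\cdots b_{-1})|+\Delta-L$, again a function of $b_{-L}\cdots b_L$ and $\Delta$; the two inequalities from~\eqref{L} are exactly what makes this window fit inside $\varphi(b_{-L}\cdots b_L)$.

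The algorithm therefore reads: given $\S(n)$ and $d$, output the set of all triples obtained via the three displayed formulas as $(\psi,a,\mathbf{b})$ ranges over $\S(n)$ and $q$ over $\{0,1,\ldots,|\varphi(a)|-1\}$; this is a finite, effective procedure that makes no reference to $n$. Correctness follows from the desubstitution in both directions. By recurrence every triple of $\S(N)$ is realized by an occurrence of a length-$N$ factor at an arbitrarily large position, which decomposes as above with $i\ge L$; hence its triple appears in the output and $\S(N)\subseteq(\text{output})$. Conversely, each $(\psi,a,\mathbf{b})\in\S(n)$ is realized by some factor $u_i\cdots u_{i+n-1}$ with $i\ge L$, and by recurrence $i$ may be taken arbitrarily large; then for every admissible $q$ the length-$N$ factor starting at position $|\varphi(u_0\cdots u_{i-1})|+q$, which is at least $L$, has exactly the triple produced by the formulas, so $(\text{output})\subseteq\S(N)$. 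I expect the only genuine difficulty to be the desubstitution bookkeeping — locating both ends of $W$ in the block decomposition, treating the two signs of $\Delta$ uniformly, and confirming that for small $n$, when $\varphi(\tilde v)$ and the window-images are short, every letter needed to compute the triple of $W$ still lies within the recorded word $b_{-L}\cdots b_L$ — which is exactly the purpose of the two length bounds built into the definition of $L$ in~\eqref{L}.
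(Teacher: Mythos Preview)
Your proof is correct and follows essentially the same desubstitution strategy as the paper: both locate a length-$N$ factor inside the $\varphi$-image of a length-$n$ factor via an offset parameter (your $q$, the paper's $t$), derive $\Psirel(W)=\psi\M-(q+d)e_0\pm\Psi(X)$ with $|X|=|q+d-h|$, and use the constants $H$ and $L$ from~\eqref{H}--\eqref{L} to guarantee that all letters needed for $X$ and for the new $(2L+1)$-window lie inside $\varphi(b_{-L}\cdots b_L)$. The only organisational difference is that the paper invokes Proposition~\ref{B(n)} to confine both $j$ and $J$ to explicit finite ranges before setting up the correspondence, whereas you bypass that proposition and use recurrence of $\u$ directly to secure both inclusions $\S(N)\subseteq(\text{output})$ and $(\text{output})\subseteq\S(N)$; the core computation and the verification of the index bounds are otherwise identical.
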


\begin{proof}
Since $\langle N\rangle_U$ has $k+2$ digits, the greedy algorithm implies that $N<U_{k+2}$. Therefore, according to Proposition~\ref{B(n)}, the set $\S(N)$ is given as
$$
\S(N)=
\left\{
\left(
\Psirel(u_J u_{J+1}\cdots u_{J+N-1}),
u_J,
u_{J+N-L}\cdots u_{J+N+L}
\right)
\;;\; U_{k+1+P}\leq J<U_{k+1+Q}
\right\}\,.
$$
Let $\left(\Psirel(u_J\cdots u_{J+N-1}),u_J,u_{J+N-L}\cdots u_{J+N+L}\right)\in\S(N)$.
In the proof we will find an element $\left(\psi,a,b_{-L}\cdots b_L\right)\in\S(n)$ and a way how to express
$$
\left(\Psirel(u_J\cdots u_{J+N-1}),u_J,u_{J+N-L}\cdots u_{J+N+L}\right)
$$
in terms of $\left(\psi,a,b_{-L}\cdots b_L\right)$. On the other hand, it will be obvious that for a given $\left(\psi,a,b_{-L}\cdots b_L\right)\in\S(n)$, the algorithm gives an element of $\S(N)$ (more precisely speaking, one element of $\S(n)$ leads generally to several elements of $\S(N)$; details will be explained later). To sum up, the method we are going to derive transforms the whole set $\S(n)$ into the whole set $\S(N)$.

First of all, the equation
\begin{multline*}
u_{U_{k+1+P}}\cdots u_{U_{k+1+Q}-1}=\left(\varphi^{k+1+P}(0)\right)^{-1}\varphi^{k+1+Q}(0) \\
=\varphi\left(\left(\varphi^{k+P}(0)\right)^{-1}\varphi^{k+Q}(0)\right)=\varphi(u_{U_{k+P}}\cdots u_{U_{k+Q}-1})
\end{multline*}
implies that for any $J\in[U_{k+1+P},U_{k+1+Q}-1]$, there is a $j\in[U_{k+P},U_{k+Q}-1]$ such that 
$u_{J}\cdots u_{U_{k+1+Q}-1}$ is a suffix of $\varphi(u_{j}\cdots u_{U_{k+Q}-1})$.
Let $j$ be the greatest number with this property. The following triple,
$$
\left(\psi,a,b_{-L}\cdots b_0\cdots b_L\right):=
\left(
\Psirel(u_j u_{j+1}\cdots u_{j+n-1}),
u_j,
u_{j+n-L}\cdots u_{j+n}\cdots u_{j+n+L}
\right)
$$
is obviously an element of $\S(n)$.
Our aim is to express $\Psirel(u_J\cdots u_{J+N-1})$, $u_J$, $u_{J+N-L}\cdots u_{J+N+L}$ in terms of $(\psi,a,b_{-L}\cdots b_L)$.
Before we do so, it is useful to introduce symbols for the images of $a$ and $b_{-L}\cdots b_L$,
\begin{equation}\label{xy}
\begin{aligned}
\varphi(a)=x_1\cdots x_p\,,\qquad &\qquad\varphi(b_0)=y_1\cdots y_q\,, \\
\varphi(b_{-L}\cdots b_{-1})=y_{-r}\cdots y_0\,,&\qquad\varphi(b_{1}\cdots b_{L})=y_{q+1}\cdots y_{s}\,.
\end{aligned}
\end{equation}

Now we can proceed to expressing $\Psirel(u_J\cdots u_{J+N-1})$, $u_J$, $u_{J+N-L}\cdots u_{J+N+L}$.
We start with the term $u_J$.
Since $j$ is the \emph{greatest} number such that $u_{J}\cdots u_{U_{k+1+Q}-1}$ is a suffix of $\varphi(u_{j}\cdots u_{U_{k+Q}-1})$, necessarily
$u_J\cdots u_{J+N-1}$ is a prefix of
$$
\hat{x}^{-1}\varphi(u_j u_{j+1}u_{j+2}\cdots)\,,
$$
where $\hat{x}$ is a prefix of $\varphi(u_j)$ (i.e., of $\varphi(a)$) of length $t$ for a certain $t\in[0,|\varphi(a)|)$.
With regard to equation~\eqref{xy}, we have
\begin{equation}\label{a}
u_J=x_{t+1}\,.
\end{equation}
Both substitutions \eqref{simpleParry} and \eqref{nonsimpleParry} imply $\hat{x}=0^t$, hence $\hat{x}^{-1}=0^{-t}$.
The number $t$ is an ``offset'' parameter. There is an unambigous correspondence between $J$ and the pair $(j,t)$.

Now we proceed to the term $\Psirel(u_J u_{J+1}\cdots u_{J+N-1})$.
By definition,
$$
\Psirel(u_J u_{J+1}\cdots u_{J+N-1})=\Psi(u_J u_{J+1}\cdots u_{J+N-1})-\Psi(u_0 u_{1}\cdots u_{N-1})\,.
$$
It is easy to express the subtrahend $\Psi(u_0 u_{1}\cdots u_{N-1})$: due to equation~\eqref{prefix u} and the assumption $\langle N\rangle_U=\langle n\rangle_U d$, it holds
$$
u_0 u_{1}\cdots u_{N-1}=\varphi(u_0u_1\cdots u_{n-1})0^d\,.
$$
Therefore, with regard to equation~\eqref{matice},
\begin{equation}\label{subtrahend}
\Psi(u_0 u_{1}\cdots u_{N-1})=\Psi(u_0u_1\cdots u_{n-1})\M+\Psi(0^d)\,.
\end{equation}
Expressing the minuend $\Psi(u_J u_{J+1}\cdots u_{J+N-1})$ in terms of $(\psi,a,b_{-L}\cdots b_L)$ is a more complicated task.
Let us denote $\mathcal{L}:=|0^{-t}\varphi(u_j u_{j+1}\cdots u_{j+n-1})|$.
It holds:
\begin{itemize}
\item If $\mathcal{L}\leq N$, then $u_J\cdots u_{J+N-1}=0^{-t}\varphi(u_j\cdots u_{j+n-1})\tilde{y}$, where $\tilde{y}$ is a prefix of $\varphi(u_{j+n}u_{j+n+1}\cdots)=\varphi(b_{0}b_{1}\cdots)=y_1y_2\cdots$ of length $N-\mathcal{L}$.
\item If $\mathcal{L}\geq N$, then $u_J\cdots u_{J+N-1}=0^{-t}\varphi(u_j\cdots u_{j+n-1})\tilde{y}^{-1}$, where $\tilde{y}$ is a suffix of $\varphi(\cdots u_{j+n-2}u_{j+n-1})=\varphi(\cdots b_{-2}b_{-1})=\cdots y_{-1}y_{0}$ of length $\mathcal{L}-N$.
\end{itemize}
Hence
\begin{multline*}\label{minuend}
\Psi(u_J u_{J+1}\cdots u_{J+N-1})=\Psi\left(0^{-t}\varphi(u_j u_{j+1}\cdots u_{j+n-1})\right)+\sgn(N-\mathcal{L})\cdot\Psi(\tilde{y})\\
=-\Psi(0^t)+\Psi(u_j u_{j+1}\cdots u_{j+n-1})\M+\sgn(N-\mathcal{L})\cdot\Psi(\tilde{y})\,,
\end{multline*}
where we have again used equation~\eqref{matice}.
This result together with equation~\eqref{subtrahend} allows to express $\Psirel(u_J u_{J+1}\cdots u_{J+N-1})$,
\begin{equation}\label{relativni}
\begin{split}
&\Psirel(u_J u_{J+1}\cdots u_{J+N-1})=\Psi(u_J u_{J+1}\cdots u_{J+N-1})-\Psi(u_0 u_{1}\cdots u_{N-1}) \\
=&-\Psi(0^t)+\Psi(u_j u_{j+1}\cdots u_{j+n-1})\M+\sgn(N-\mathcal{L})\cdot\Psi(\tilde{y})-\Psi(u_0 u_{1}\cdots u_{n-1})\M-\Psi(0^d) \\
=&-\Psi(0^t)+\Psi^\mathrm{rel}(u_j u_{j+1}\cdots u_{j+n-1})\M-\Psi(0^d)+\sgn(N-\mathcal{L})\cdot\Psi(\tilde{y}) \\
=&-\Psi(0^{t+d})+\psi\M+\sgn(N-\mathcal{L})\cdot\Psi(\tilde{y})\,,
\end{split}
\end{equation}
where $\psi$ denotes $\Psi^\mathrm{rel}(u_j u_{j+1}\cdots u_{j+n-1})$.
Equation~\eqref{relativni} is not yet satisfactory because of the term $\sgn(N-\mathcal{L})\cdot\Psi(\tilde{y})$ that needs to be expressed in terms of $(\psi,a,b_{-L}\cdots b_L)$. The value $N-\mathcal{L}$ can be obtained by comparing the sums of components of vectors on the left- and right-hand side of~\eqref{relativni}. Let us denote the sum of components of $\psi\M$ by $h$. The quantity $N-\mathcal{L}$ is equal to the sum of components of $\sgn(N-\mathcal{L})\cdot\Psi(\tilde{y})$. Since the sum of components of any relative Parikh vector is $0$, equation~\eqref{relativni} implies
$$
0=-(t+d)+h+(N-\mathcal{L})\,,
$$
hence
\begin{equation}\label{N-L}
N-\mathcal{L}=t+d-h\,.
\end{equation}
With regard to above considerations, we have:
\begin{itemize}
\item If $t>h-d$, it holds $\mathcal{L}<N$, hence $\tilde{y}=y_1y_2\cdots y_{N-\mathcal{L}}=y_1y_2\cdots y_{t+d-h}$.
\item If $t<h-d$, it holds $\mathcal{L}>N$, hence $\tilde{y}=y_{N-\mathcal{L}+1}\cdots y_{-1}y_{0}=y_{t+d-h+1}\cdots y_{-1}y_{0}$.
\item If $t=h-d$, it holds $\mathcal{L}=N$, hence $\tilde{y}=\epsilon$.
\end{itemize}
This completes the search for the expression of $\Psirel(u_J u_{J+1}\cdots u_{J+N-1})$. It remains to express the third term of the triple, namely $u_{J+N-L}\cdots u_{J+N+L}$, in terms of $y_{-r}\cdots y_{s}$. It holds:
\begin{itemize}
\item If $t>h-d$, then $u_J\cdots u_{J+N-1}=0^{-t}\varphi(u_j\cdots u_{j+n-1})y_1y_2\cdots y_{t+d-h}$, thus $u_{J+N}=y_{t+d-h+1}$.
\item If $t<h-d$, then $u_J\cdots u_{J+N-1}=0^{-t}\varphi(u_j\cdots u_{j+n-1})(y_{t+d-h+1}\cdots y_{-1}y_{0})^{-1}$, thus $u_{J+N}=y_{t+d-h+1}$.
\item If $t=h-d$, then $u_J\cdots u_{J+N-1}=0^{-t}\varphi(u_j\cdots u_{j+n-1})$, thus $u_{J+N}=y_1=y_{t+d-h+1}$.
\end{itemize}
In all cases we have $u_{J+N}=y_{t+d-h+1}$, hence
$$
u_{J+N-L}\cdots u_{J+N}\cdots u_{J+N+L}=y_{t+d-h+1-L}\cdots y_{t+d-h+1} \cdots y_{t+d-h+1+L}\,.
$$
At this moment we have expressed all three elements of the triple
$$
(\Psirel(u_J\cdots u_{J+N-1}),u_J,u_{J+N-L}\cdots u_{J+N+L})
$$
in terms of $(\psi,a,b_{-L}\cdots b_L)$.
Note also that applying the formulas on any chosen $(\psi,a,b_{-L}\cdots b_L)\in\S(n)$ with any choice of the ``offset'' parameter $t\in[0,|\varphi(a)|)$ naturally gives a triple belonging to $\S(N)$.

In the rest of the proof we need to check that the subscripts of $y$ occurring in previous expressions do not run over the interval $[-r,s]$, i.e.,
$$
t+d-h\leq s\,,\quad t+d-h+1\geq-r\,,\quad t+d-h+1-L\geq-r\,,\quad t+d-h+1+L\leq s\,.
$$
This system of conditions is equivalent to
\begin{equation}\label{ineq s r}
s-L-1-(t+d-h)\geq0\quad\wedge\quad r-L+1+(t+d-h)\geq0\,.
\end{equation}
Let us estimate the left-hand sides of inequalities~\eqref{ineq s r}.
It holds $d\in\{0,1,\ldots,\alpha_0\}$, cf. Section~\ref{Preliminaries}. Since $t<|\varphi(a)|$ and $\max_{\ell\in\A}|\varphi(\ell)|=\alpha_0+1$, we have $t\leq\alpha_0$. It also holds $|h|\leq H$ due to equation~\eqref{H}. Consequently,
\begin{equation}\label{l-N}
-H\leq t+d-h\leq H+2\alpha_0\,.
\end{equation}
We also need to estimate $s,r$, for which we use the definition of $L$. Since $|\varphi(b_{1}\cdots b_{L})|=|y_{q+1}\cdots y_{s}|=s-q$, equation \eqref{L} implies $s-q-L\geq2\alpha_0+H$. Obviously $q\geq1$, hence $s-1-L\geq2\alpha_0+H$.
Similarly, equation~\eqref{L} together with $|\varphi(b_{-L}\cdots b_{-1})|=|y_{-r}\cdots y_0|=r+1$ implies $r+1-L\geq2\alpha_0+H$.
Combining these inequalities with inequalities~\eqref{l-N}, we obtain
\begin{align*}
s-L-1-(t+d-h)&\geq 2\alpha_0+H-(H+2\alpha_0)=0\,, \\
r-L+1+(t+d-h)&\geq 2\alpha_0+H-H=2\alpha_0\geq0\,,
\end{align*}
which proves inequalities~\eqref{ineq s r}.

\end{proof}

Let us summarize the algorithm for transforming $\S(n)$ into $\S(N)$, bringing together formulas derived in the proof of Proposition~\ref{inductive}.
Assume that the set $\S(n)$ for a certain $n\in\N$ is given, and $\langle N\rangle_U=\langle n\rangle_U d$ for a $d\in\{0,1,\ldots,\alpha_0\}$. According to the proof of Proposition~\ref{inductive}, the set $\S(N)$ can be constructed from $\S(n)$ by a procedure that consists in taking the elements $(\psi,a,b_{-L}\cdots b_L)\in\S(n)$ one by one, and for each of them performing the following steps:
\begin{enumerate}
\item Denote the sum of components of the vector $\psi\M$ by $h$, and define $x_i$, $y_i$ according to equations~\eqref{xy}.
\item For every $t=0,1,\ldots,|\varphi(a)|-1$, construct the triple
$$
\left(\hat{\Psi},
\hat{a},
\hat{b}_{-L}\cdots\hat{b}_{L}
\right)\,,
$$
where $\hat{a}=x_{t+1}$, $\hat{b}_{-L}\cdots\hat{b}_{L}=y_{t+d-h+1-L}\cdots y_{t+d-h+1}\cdots y_{t+d-h+1+L}$, and
$$
\hat{\Psi}=\left\{\begin{array}{ll}
\psi\M-\Psi(0^{t+d}) & \text{if } t=h-d; \\
\psi\M-\Psi(0^{t+d})+\Psi(y_1\cdots y_{t+d-h}) & \text{if } t>h-d; \\
\psi\M-\Psi(0^{t+d})-\Psi(y_{1+t+d-h}\cdots y_{0}) & \text{if } t<h-d.
\end{array}\right.
$$
\end{enumerate}
The collection of all triples $\left(\hat{\Psi},\hat{a},\hat{b}_{-L}\cdots\hat{b}_{L}\right)$ constructed in step 2 constitutes the set $\S(N)$.

\subsection{$U$-automaticity}

According to Proposition~\ref{inductive}, if the normal $U$-representation of an $N\in\N$ is obtained as the normal $U$-representation of an $n\in\N$ with an attached digit, then the set $\S(N)$ can be constructed from the set $\S(n)$. Recall also that we have proven in Corollary~\ref{Finite} that there exist finitely many sets $\S_1,\ldots,\S_M$ such that for any $n\in\N$, $\S(n)$ coincides with $\S_j$ for a certain $j$. These two properties together have a straightforward and important corollary:

\begin{corollary}\label{Subsets}
There exists a function $\delta(j,d)$ for $j\in\{1,\ldots,M\}$ and $d\in\{0,\ldots,\alpha_0\}$ such that for any pair $n,N\in\N$ satisfying
$$
\langle n\rangle_U=d_k d_{k-1}\cdots d_1d_0 \qquad\text{and}\qquad \langle N\rangle_U=d_k d_{k-1}\cdots d_1d_0d
$$
it holds
\begin{equation}\label{SNn}
\S(n)=\S_j \qquad\Rightarrow\qquad \S(N)=\S_{\delta(j,d)}\,.
\end{equation}
\end{corollary}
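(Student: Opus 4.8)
The plan is to read off $\delta$ directly from the transformation algorithm summarised right after Proposition~\ref{inductive}. The crucial point is that this algorithm, when supplied with an input set together with an appended digit $d\in\{0,1,\ldots,\alpha_0\}$, produces its output set using \emph{only} the triples contained in the input set and the digit $d$: every quantity it forms — the number $h$ (the sum of components of $\psi\M$), the words $x_i,y_i$ of~\eqref{xy}, the offsets $t\in[0,|\varphi(a)|)$, and hence each output triple $(\hat\Psi,\hat a,\hat b_{-L}\cdots\hat b_L)$ — is computed from an element $(\psi,a,b_{-L}\cdots b_L)$ of the input set and from $d$, while the remaining ingredients it uses ($L$, $H$, $\M$, $\varphi$) are fixed once and for all and do not depend on $n$. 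Thus the algorithm realises a well-defined map $\mathcal{A}$ on pairs $(\mathcal{T},d)$, where $\mathcal{T}$ ranges over sets of triples of the shape appearing in~\eqref{S(n)}, and Proposition~\ref{inductive} asserts precisely that $\mathcal{A}(\S(n),d)=\S(N)$ whenever $\langle N\rangle_U=\langle n\rangle_U d$.

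Given this, I would define $\delta$ as follows. Fix $j\in\{1,\ldots,M\}$ and $d\in\{0,1,\ldots,\alpha_0\}$. If there is some $n\in\N$ with $\S(n)=\S_j$ for which $\langle n\rangle_U d$ is the normal $U$-representation of some $N\in\N$, set $\delta(j,d)$ to be the unique index $i\in\{1,\ldots,M\}$ with $\mathcal{A}(\S_j,d)=\S_i$; such an $i$ exists and is unique by Corollary~\ref{Finite}, because $\mathcal{A}(\S_j,d)=\mathcal{A}(\S(n),d)=\S(N)$ is one of the sets $\S(\cdot)$, and — crucially — this value does not depend on the particular choice of $n$, since $\mathcal{A}(\S_j,d)$ is determined by $\S_j$ and $d$ alone. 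If no such $n$ exists, set $\delta(j,d):=1$; any value will do, as this entry is never invoked in~\eqref{SNn} (the DFAO only ever reads genuine normal $U$-representations). It then remains to verify~\eqref{SNn}: if $\langle n\rangle_U=d_k\cdots d_0$, $\langle N\rangle_U=d_k\cdots d_0 d$ and $\S(n)=\S_j$, then $\langle n\rangle_U d$ is a normal $U$-representation, so $\delta(j,d)$ was defined via the first case, and $\S(N)=\mathcal{A}(\S(n),d)=\mathcal{A}(\S_j,d)=\S_{\delta(j,d)}$ by Proposition~\ref{inductive}.

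The part I expect to carry the actual content — as opposed to bookkeeping — is the first observation: that the output of the transformation algorithm depends only on the input set and on $d$, and not on $n$ through some hidden channel such as the ``greatest $j$'' selection or the offset parameter $t$ in the proof of Proposition~\ref{inductive}. This is exactly why that proof was distilled into the self-contained procedure stated in the summary after Proposition~\ref{inductive}: once one is convinced that the collection of all triples produced there is a function of the input set $\S(n)$ and of $d$ only, the corollary follows immediately by composing that function with the bijection between the sets $\S_1,\ldots,\S_M$ of Corollary~\ref{Finite} and their indices.
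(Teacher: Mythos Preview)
Your proposal is correct and takes essentially the same approach as the paper, which treats the corollary as immediate from Proposition~\ref{inductive} and Corollary~\ref{Finite} without writing out a separate proof. You have made explicit the one substantive point the paper leaves implicit, namely that the transformation algorithm summarised after Proposition~\ref{inductive} depends only on the input set and on $d$ (not on $n$ through any other channel); once this is noted, the existence of $\delta$ is indeed just bookkeeping with the finite index set from Corollary~\ref{Finite}.
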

We may assume without loss of generality that the sets $\S_j$ are enumerated so that
\begin{equation}\label{S_d}
\S_d=\S(d) \qquad\text{for all } d=1,\ldots,\alpha_0\,.
\end{equation}
For such enumeration, we put formally
\begin{equation}\label{S_0}
\S_0=\S(0):=\emptyset
\end{equation}
and extend the definition of $\delta$ to the value $j=0$ as follows,
\begin{equation}\label{delta(0,d)}
\delta(0,d):=d \qquad\text{for all } d=1,\ldots,\alpha_0\,.
\end{equation}
The assumptions~\eqref{S_d}, \eqref{S_0} and \eqref{delta(0,d)} make the implication~\eqref{SNn} valid also for pairs $n,N$ such that $n=0$ and $N\in\{1,\ldots,\alpha_0\}$.

The function $\delta$ allows to determine $\S(n)$ for any $n\in\N$, as it is demonstrated in Proposition~\ref{S_j}. Let us recall that the symbol $\delta(0,d_k d_{k-1}\cdots d_1d_0)$ has the meaning
$$
\delta(0,d_k d_{k-1}\cdots d_1d_0)\equiv\delta(\delta(\cdots\delta(\delta(0,d_k),d_{k-1})\cdots,d_1),d_0)\,,
$$
cf. Section~\ref{Preliminaries}.
\begin{proposition}\label{S_j}
Let $n\in\N$. It holds
\begin{equation}\label{S(n) S_j}
\S(n)=\S_j \quad\text{for}\quad j=\delta(0,\langle n\rangle_U)\,.
\end{equation}
\end{proposition}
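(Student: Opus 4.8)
The plan is to prove the statement by induction on the number $\kappa$ of digits of $\langle n\rangle_U$, with Corollary~\ref{Subsets} supplying the inductive step and the formal conventions~\eqref{S_d}, \eqref{S_0}, \eqref{delta(0,d)} handling the start of the induction.

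For the base case $\kappa=0$ we have $n=0$, hence $\langle n\rangle_U=\epsilon$ and $\delta(0,\langle n\rangle_U)=\delta(0,\epsilon)=0$; since $\S(0)=\S_0$ by~\eqref{S_0}, equality~\eqref{S(n) S_j} holds trivially. (If one prefers to start at $\kappa=1$, the single-digit case $n=d_0\in\{1,\ldots,\alpha_0\}$ is settled directly by~\eqref{S_d} and $\delta(0,d_0)=d_0$ from~\eqref{delta(0,d)}.)

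For the inductive step, let $\kappa\geq1$ and assume~\eqref{S(n) S_j} holds for every integer whose normal $U$-representation has fewer than $\kappa$ digits. Let $n$ satisfy $\langle n\rangle_U=d_{\kappa-1}d_{\kappa-2}\cdots d_1 d_0$ with $d_{\kappa-1}\neq0$. A prefix of a normal $U$-representation is again a normal $U$-representation — this is a standard property of the greedy algorithm, equivalently of the admissibility condition for Parry numeration systems — so there is an integer $n'$ with $\langle n'\rangle_U=d_{\kappa-1}d_{\kappa-2}\cdots d_1$ (with $n'=0$ and $\langle n'\rangle_U=\epsilon$ when $\kappa=1$). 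Thus $\langle n\rangle_U=\langle n'\rangle_U d_0$ and $\langle n'\rangle_U$ has $\kappa-1$ digits, so the induction hypothesis gives $\S(n')=\S_{j'}$ with $j'=\delta(0,\langle n'\rangle_U)$. Since Corollary~\ref{Subsets}, together with the conventions~\eqref{S_d}, \eqref{S_0}, \eqref{delta(0,d)}, applies to the pair $n',n$ with attached digit $d=d_0$, we obtain
$$
\S(n)=\S_{\delta(j',\,d_0)}=\S_{\delta(\delta(0,\langle n'\rangle_U),\,d_0)}=\S_{\delta(0,\langle n\rangle_U)}\,,
$$
which is exactly~\eqref{S(n) S_j}. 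As each step strips one digit and the process terminates at $\kappa=0$, the claim follows for every $n\in\N$.

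I expect no genuine obstacle: the real work has already been done in Proposition~\ref{inductive} and Corollary~\ref{Subsets}, and what remains is a routine induction. The only points demanding a little care are purely formal — checking that the degenerate instance $n=0$ is absorbed into the base case through~\eqref{S_0} and $\delta(0,\epsilon)=0$, and recording that truncating the least significant digit of a normal $U$-representation again yields a normal $U$-representation, which is what lets the induction on digit length descend all the way down.
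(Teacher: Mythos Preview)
Your proof is correct and follows essentially the same approach as the paper's: an induction on the number of digits of $\langle n\rangle_U$, with Corollary~\ref{Subsets} providing the inductive step and the conventions~\eqref{S_d}, \eqref{S_0}, \eqref{delta(0,d)} anchoring the base. The only differences are cosmetic --- you start the induction at $\kappa=0$ (the empty representation) rather than at the single-digit case, and you make explicit the fact that a prefix of a normal $U$-representation is again a normal $U$-representation, which the paper uses tacitly.
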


\begin{proof}
We prove the statement by induction on $k$.
\begin{itemize}
\item[I.] Let $k=0$. Then $\langle n\rangle_U=d_0\in\{0,1,\ldots,\alpha_0\}$ and $j=\delta(0,{d_0})$. On one hand, $\langle n\rangle_U=d_0$ implies $n=d_0$, hence $\S(n)=\S(d_0)$, thus $\S(n)=\S_{d_0}$ by assumptions~\eqref{S_d}. On the other hand, it holds $\delta(0,{d_0})=d_0$ due to definition~\eqref{delta(0,d)}, hence $\S_j=\S_{d_0}$. To sum up, the statement $\S(n)=\S_j$ for $j=\delta(0,d_0)$ holds true.
\item[II.] Let $\langle n \rangle_U =d_k d_{k-1}\cdots d_1d_0$ have $k$ digits for a $k\geq1$. We assume that equation~\eqref{S(n) S_j} holds true for any integer with a normal $U$-representation having $k-1$ digits, in particular for $\langle n'\rangle_U=d_k d_{k-1}\cdots d_1$. Equation~\eqref{S(n) S_j} gives $\S(n')=\S_{j'}$ for $j'=\delta(0,d_kd_{k-1}\cdots d_2d_1)$. Due to equation~\eqref{SNn} it holds $\S(n)=\S_{\delta(j',d_0)}$. With regard to the expression for $j'$, we have $\delta(j',d_0)=\delta(\delta(0,d_kd_{k-1}\cdots d_2d_1),d_0)=\delta(0,d_kd_{k-1}\cdots d_2d_1d_0)$. If we denote this value by $j$, we see that equation~\eqref{S(n) S_j} holds true.
\end{itemize}
\end{proof}

Let us define sets $\Prel_1,\ldots,\Prel_M$ as follows,
$$
\Prel_j=\left\{\psi\;;\;(\psi,a,b_{-L}\cdots b_L)\in\S_j\right\} \qquad\text{for all $j=1,\ldots,M$}\,.
$$
Taking advantage from Proposition~\ref{S_j}, we can express sets $\Prel(n)$ for $n\in\N$ in terms of $\Prel_1,\ldots,\Prel_M$ and $\langle n\rangle_U$,
\begin{equation}\label{Prel(n) Prel_j}
\Prel(n)=\Prel_{\delta(0,\langle n\rangle_U)}\,.
\end{equation}
Consequently, there exists a finite number of sets of relative Parikh vectors, $\Prel_1,\ldots,\Prel_M$, such that for any $n\in\N$, $\Prel(n)$ is equal to $\Prel_j$ for a certain $j\in\{1,\ldots,M\}$. The value $j$ can be found using the normal $U$-representation of $n$, therefore, in $\mathcal{O}(\log n)$ steps.

Recall that the abelian complexity $\AC(n)$ is equal to the cardinality of the set $\Prel(n)$, cf. equation~\eqref{ACrel}. With regard to that, we introduce a function $\tau:\{1,\ldots,M\}\to\N$ by the relation
\begin{equation}\label{tau}
\tau(j)=\#\Prel_j\,.
\end{equation}
Now we take advantage of equation~\eqref{Prel(n) Prel_j} to obtain the final formula for $\AC(n)$:
\begin{theorem}\label{Final}
The abelian complexity of $\u$ is given by the formula
\begin{equation}\label{AC final}
\AC(n)=\tau\left(\delta(0,\langle n\rangle_U)\right)\,.
\end{equation}
\end{theorem}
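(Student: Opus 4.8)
The plan is to obtain \eqref{AC final} by chaining together the facts already assembled, with no new computation required. The starting point is equation~\eqref{ACrel}, which reduces the task to counting $\Prel(n)$: one has $\AC(n)=\#\Prel(n)$. Next I invoke Proposition~\ref{S_j}, which identifies $\S(n)$ with $\S_j$ for $j=\delta(0,\langle n\rangle_U)$; feeding this into the definition of $\Prel_j$ (the set of first components of the triples in $\S_j$) together with the Property~(P1) description \eqref{Prel S} of $\Prel(n)$ yields $\Prel(n)=\Prel_{\delta(0,\langle n\rangle_U)}$, i.e.\ equation~\eqref{Prel(n) Prel_j}. Finally, the definition~\eqref{tau} of $\tau$ states $\#\Prel_j=\tau(j)$. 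Putting $j=\delta(0,\langle n\rangle_U)$ and reading the chain $\AC(n)=\#\Prel(n)=\#\Prel_{\delta(0,\langle n\rangle_U)}=\tau(\delta(0,\langle n\rangle_U))$ gives exactly \eqref{AC final}.

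The only subtlety worth an explicit word concerns small arguments. For $1\le n\le\alpha_0$ the assertion collapses, via the enumeration convention~\eqref{S_d}, to $\S(n)=\S_n$, and for $n=0$ it is covered by the formal conventions \eqref{S_0} and \eqref{delta(0,d)}; these were set up precisely so that Proposition~\ref{S_j}, and hence the present theorem, holds uniformly for every $n\in\N$. Beyond that there is genuinely no obstacle in this last step: all the substance lies upstream --- in Proposition~\ref{B(n)}, which localizes $\S(n)$ to a bounded window of $\u$, in Proposition~\ref{inductive}, which realizes the transition $\S(n)\mapsto\S(N)$ governed by a single appended digit, and in the finiteness Corollary~\ref{Finite}. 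Once those are in hand, Theorem~\ref{Final} is pure bookkeeping.

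Finally, let me record what \eqref{AC final} buys us. Reading $\delta$ as a transition function on the finite state set $\{0,1,\ldots,M\}$ with input alphabet $\{0,1,\ldots,\alpha_0\}$ and initial state $0$, and $\tau$ as the output function into the (finite) set of attained cardinalities $\#\Prel_j$, the formula $\AC(n)=\tau(\delta(0,\langle n\rangle_U))$ says precisely that $(\AC(n))_{n=1}^\infty$ is computed by this DFAO from the normal $U$-representation of $n$, hence is a $U$-automatic sequence; since $\langle n\rangle_U$ has $\mathcal{O}(\log n)$ digits, $\AC(n)$ is evaluated in $\mathcal{O}(\log n)$ steps, as announced in the introduction.
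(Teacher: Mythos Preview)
Your proof is correct and follows exactly the same route as the paper: the paper's own proof is a single sentence stating that the theorem is a straightforward consequence of \eqref{ACrel}, \eqref{Prel(n) Prel_j}, and the definition \eqref{tau}, and your chain $\AC(n)=\#\Prel(n)=\#\Prel_{\delta(0,\langle n\rangle_U)}=\tau(\delta(0,\langle n\rangle_U))$ is precisely that. You have simply unpacked where \eqref{Prel(n) Prel_j} comes from (Proposition~\ref{S_j} plus \eqref{Prel S}) and added the contextual remarks about small $n$ and the DFAO interpretation, which mirror the paper's surrounding text.
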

\begin{proof}
The statement is a straightforward consequence of equations~\eqref{ACrel}, \eqref{Prel(n) Prel_j} and the definition~\eqref{tau}.
\end{proof}
Equation~\eqref{AC final} implies that the sequence $\left(\AC(n)\right)_{n=1}^{\infty}$ is $U$-automatic. The sequence is generated by a DFAO $\left(Q,\Sigma,\delta,q_0,\Delta,\tau\right)$, where
\begin{itemize}
\item $S=\{0,1,\ldots,M\}$ is the set of states;
\item $\Sigma=\{0,1,\ldots,\alpha_0\}$ is the input alphabet;
\item $\delta$ is the transition function, determined in Corollary~\ref{Subsets};
\item $q_0=0$ is the initial state;
\item $\Delta\subset\{1,\ldots,(2c+1)^m\}$ is the output alphabet, corresponding to the image of $\AC$;
\item $\tau$ is the output function, defined above by~\eqref{tau}.
\end{itemize}


\section{How to find $\S_1,\ldots,\S_M$}

The proof of existence of a deterministic finite automaton generating the sequence $\left(\AC(n)\right)_{n=1}^{\infty}$, given in Section~\ref{Main}, relies on the existence of sets $\S_1,\ldots,\S_M$ with properties referred to as (P1), (P2) and (P3). In this section we will present an algorithm allowing to find the sets $\S_1,\ldots,\S_M$ explicitly.

Recall that $\S(n)$ for any $n\in\N$ can be found using equation~\eqref{S(n) p q}, thus one might attempt to construct sets $\S(n)$ for $n=1,2,3,\ldots$ and enumerate those that are mutually different by $\S_j$ for $j=1,2,\ldots$. 
However, this approach does not work, because it provides no criterion to recognize when the collection of sets $\S_j$ is already complete in the sense of property (P2). Below we propose a procedure with a stop criterion, which yields $\S_1,\ldots,\S_M$ in a finite number of steps.

The stop criterion is based on exploiting property (P3). We know that if two numbers $n,N$ satisfy $\langle N\rangle_U=\langle n\rangle_U d$ for a certain $d\in\{0,1,\ldots,\alpha_0\}$, then $\S(N)$ depends only on $\S(n)$ and $d$. Note that the condition $\langle N\rangle_U=\langle n\rangle_U d$ means that $\langle N\rangle_U$ has one digit more than $\langle n\rangle_U$; in other words, $n\in[U_{k},U_{k+1})$ and $N\in[U_{k+1},U_{k+2})$ for a certain $k\in\N_0$. Therefore, for any $k\in\N_0$, the collection of sets
$$
\S(N) \qquad\text{for}\; N=U_{k+1},U_{k+1}+1,\ldots,U_{k+2}-1
$$
is obtained from the collection of pairs
$$
(\S(n),d) \qquad\text{for}\; n=U_k,U_k+1,\ldots,U_{k+1}-1\;\wedge\;\langle n\rangle_U d \text{ is a valid normal $U$-representation}\,.
$$
Roughly speaking, our algorithm consists in constructing \emph{pairs} $(\S(n),d)$ of certain specified properties for $n\in[U_{k},U_{k+1})$ with $k=0,1,2,\ldots$, and the stop criterion is defined as attaining a $k$ for which no new pairs $(\S(n),d)$ are found. The idea is that if all pairs $(\S(n),d)$ constructed for $n\in[U_{k},U_{k+1})$ coincide with pairs already constructed for $n'<U_{k}$, then increasing $k$ by one and repeating the procedure cannot give any novel output, simply because it has no novel input. This means that all possible pairs $(\S(n),d)$ have been already found, thus all possible sets $\S(n)$ have been found as well. Since conditions imposed on $\S(n)$ and $d$ (see below) imply that there exists a finite number of pairs $(\S(n),d)$, the procedure necessarily terminates after a finite number of steps.

A more detailed description of the algorithm follows. Note that the algorithm in the present form is schematic and by far not optimal; its aim is primarily to be simple.
\begin{itemize}
\item[1.] Construct $\S(n)$ for $\langle n\rangle_U=d_0$ with $d_0=1,\ldots,\alpha_0$. Define $\mathrm{Mem}_1=\{\S(1),\ldots,\S(\alpha_0)\}$ and 
$$
\mathrm{Mem}_2=\{(\S(n),d)\,;\,n=1,\ldots,\alpha_0,d=0,1,\ldots,\alpha_0\}\,.
$$
\item[2.] For every $\S(n)$ added to $\mathrm{Mem}_1$ in the previous step and for every $d=0,1,\ldots,\alpha_0$,
\begin{itemize}
\item[i.] apply the procedure from the end of Section~\ref{Sect. (P3)} formally without checking whether $\langle n\rangle_U d$ is a valid normal $U$-representation, and denote the resulting set by $\S(N)$;
\item[ii.] add the pair $(\S(N),d)$ into $\mathrm{Mem}_2$;
\item[iii.] if all entries of all vectors $\psi$ in $(\psi,a,b_{-L}\cdots b_L)\in\S(N)$ have absolute values bounded by $c$, add $\S(N)$ into $\mathrm{Mem}_1$.
\end{itemize}
\item[3.] Repeat step 2 as long as the cardinality of $\mathrm{Mem}_2$ grows. Once it stops growing, put $\{\S_1,\ldots,\S_M\}=\mathrm{Mem}_1$.
\end{itemize}
We conclude the section by several explanatory remarks.
\begin{itemize}
\item The set $\mathrm{Mem}_2$ contains pairs ``(set of type $\S(n)$, $d$)'' for all $d\in\{0,1,\ldots,\alpha_0\}$. The set $\mathrm{Mem}_1$ contains sets of type $S(n)$ found by the procedure that satisfy an additional condition, formulated in step 2iii.
\item The condition ``all entries of all vectors $\psi$ in $(\psi,a,b_{-L}\cdots b_L)\in\S(N)$ have absolute values bounded by $c$'' is used in the algorithm in place of the condition ``$\langle n\rangle_U d$ is a valid normal $U$-representation'' for the reason that its fulfilment does not depend directly on $n$. Note that the conditions are not equivalent: the former one is obviously weaker than the latter one. Nevertheless, the former condition, used in step 2iii, still guarantees that $\mathrm{Mem}_1$ and $\mathrm{Mem}_2$ are finite. Indeed, the finiteness of $\mathrm{Mem}_1$ can be proven by the same method that has been used in Section~\ref{Sect. (P2)}, and the bounded cardinality of $\mathrm{Mem}_2$ is a straightforward consequence, because $\mathrm{Mem}_2$ is constructed from elements of $\mathrm{Mem}_1$.
\item The use of the weakened condition in step 2iii requires to increase the constant $L$ in order to avoid an overflow of subscripts of $y_i$, cf. the end of the proof of Proposition~\ref{inductive}. It is sufficient to choose $H$ and $L$ according to Remark~\ref{HL}.
\item Since the algorithm does not check validities of normal $U$-representations, the set $\mathrm{Mem}_1$ may contain elements that do not correspond to $\S(n)$ for any $n\in\N$. However, the presence of extra elements do not pose a problem, as they do not break the finiteness of $\mathrm{Mem}_1$.
\end{itemize}


\section{Generalizations}\label{Generalizations}

In Section~\ref{Main}, we have found a finite number of sets $\Prel_1,\ldots,\Prel_M$ such that for any $n\in\N$, the set of relative Parikh vectors $\Prel(n)$ is equal to a certain $\Prel_j$. According to equation~\eqref{Prel(n) Prel_j}, the assignment of $j$ to a given $n\in\N$ can be done by a finite automaton using the transition function $\delta$.
Consequently, any function $F:\N\to\N$ that is defined in terms of the set of relative Parikh vectors $\Prel(n)$ can be evaluated by a finite automaton using the transition function $\delta$ and an appropriate output function $\tau$. The output function $\tau$ reflects the function $F$. For instance, in previous sections we focused on the abelian complexity; since the function $\AC$ is defined in the way $\AC(n)=\#\Prel(n)$, the corresponding output function has been taken in the form $\tau(j)=\#\Prel_j$ cf. equation~\eqref{tau}. Let us bring in another example. The \emph{balance function}~\cite{Ad02,BT} of a word $\u$ is defined as
$$
B_\u(n)=\max\{\left|\,|w|_a-|w'|_a\right|\,;\,a\in\A, \text{$w,w'$ are factors of $\u$, $|w|=|w'|=n$}\}\,.
$$
The right hand side can be rewritten in terms of maximum norms of Parikh vectors,
$$
B_\u(n)=\max\{\|\Psi(w)-\Psi(w')\|_\infty\,;\,\text{$w,w'$ are factors of $\u$, $|w|=|w'|=n$}\}.
$$
A simple manipulation leads to
$$
\Psi(w)-\Psi(w')=(\Psi(w)-\Psi(u_0u_1\cdots u_{n-1}))-(\Psi(w')-\Psi(u_0u_1\cdots u_{n-1}))=\Psirel(w)-\Psirel(w')\,.
$$
It allows us to define the balance function in terms of the set $\Prel(n)$,
$$
B_\u(n)=\max\{\|\psi-\psi'\|_\infty\,;\,\psi,\psi'\in\Prel(n)\}\,.
$$
Let us put
\begin{equation}\label{tau_B}
\tau_B(j):=\max\left\{\|\psi-\psi'\|_\infty\,;\,\psi,\psi'\in\Prel_j\right\}
\end{equation}
for all $j=1,\ldots,M$.
Then it holds
$$
B_\u(n)=\tau_B\left(\delta(0,\langle n\rangle_U)\right)\,.
$$
The result is analogical to equation~\eqref{AC final} of Theorem~\ref{Final}. Consequently, the balance function of a balanced Parry word can be evaluated by a DFAO $\left(Q,\Sigma,\delta,q_0,\Delta,\tau_B\right)$, where $Q,\Sigma,\delta,q_0,\Delta$ have exactly the same meanings as in the DFAO described at the end of Section~\ref{Main}, and $\tau_B$ is given by equation~\eqref{tau_B}.

A similar result can be obtained for any other function given in terms of the set of relative Parikh vectors $\Prel(n)$.

\section*{Acknowledgements}

The author is thankful to J.-P. Allouche for useful comments and suggestions.


\end{document}